\documentclass{article}
\usepackage{amsmath}
\usepackage{amssymb}
\usepackage{amsthm}
\usepackage{amsfonts}
\usepackage{latexsym}
\usepackage{verbatim}   
\usepackage{color}    
\usepackage{mathrsfs,epsfig}  
\usepackage{fontenc}
\usepackage{textcomp}
\usepackage{fix-cm}
\usepackage{array}
\usepackage{enumerate}
\usepackage{color}
\usepackage{comment}
\usepackage{subfigure}
\usepackage[colorinlistoftodos]{todonotes}
\usepackage[bookmarks]{hyperref}
\numberwithin{equation}{section}

\newtheorem{theorem}[equation]{Theorem}
\newtheorem{proposition}[equation]{Proposition}
\newtheorem{lemma}[equation]{Lemma}
\newtheorem{corollary}[equation]{Corollary}
\newtheorem{definition}[equation]{Definition}

\newtheorem{remark}[equation]{Remark}

\newcommand{\eqn}[1]{\label{eq:{#1}}}
\newcommand{\re}[1]{(\ref{eq:{#1}})}
\newcommand{\ct}[1]{\cite{kn:{#1}}}
\newcommand{\by}[1]{\bibitem{kn:{#1}}}
\newcommand{\be}{\begin{equation}}
\newcommand{\ee}{\end{equation}}
\newcommand{\ba}{\begin{array}}
\newcommand{\ea}{\end{array}}
\newcommand{\bea}{\begin{eqnarray}}
\newcommand{\eea}{\end{eqnarray}}

\newcommand{\nablu}{\nabla_{u}}

\newcommand{\uu}{{\bf u}}
\newcommand{\vv}{{\bf v}}
\newcommand{\rr}{{\bf r}}

\newcommand{\KK}{{\cal K}}
\newcommand{\RR}{{\mathbb R}}
\newcommand{\SD}{{\cal S}_\sigma}

\newcommand{\Rho}{\varrho}

\newcommand{\dirtht}{{\partial_t\vartheta\cdot\nabla_\vartheta}}
\newcommand{\dirthx}{{\partial_x\vartheta\cdot\nabla_\vartheta}}

\title{ Some remarks on  structured Keyfitz-Kranzer systems}
\author{Ralph Saxton\\Katarzyna Saxton}

\date{}
\begin{document}
\maketitle

\begin{abstract}
Several  applications for  systems of conservation laws of the form $U_t  + (\Phi (U) U)_x =0$, $U: R_t\times R_x\rightarrow R^n$ , $n\geq 2$, with $\Phi (U) = \phi (r, \Theta): R^n\rightarrow R$, $r = |U|$, and $\Theta = U/|U|\in S^{n-1}$, are obtained by imposing  structural conditions to provide a  classification framework for solutions dependent on the form of  $\phi(U)$. By prescribing the evolution of a particular eigenvalue, we can
categorize    classes of such functions, $\phi$, for which this evolution is met, into depending  on either a scalar field $z=rK(\Theta)$, where $K: S^{n-1}\rightarrow R$, or  on the vector field $\Theta$, and find for which the amplitude of solutions may blow up in finite time.   As a consequence, solutions to the corresponding Riemann problems can be divided into those with are classical and those which involve delta-shocks and/or vacuum states.

\end{abstract}

\section{Introduction}

The $n$-dimensional Keyfitz - Kranzer equations (\ct{KK}, \ct{KK2})  take the form
\be 
{\uu}_{t} + (\phi (\uu) \uu)_x={\bf 0},  
\eqn{KK}
\ee
where 
 $\uu: (t, x)\in{\mathbb R}_+\times{\mathbb R}\rightarrow\mathbb R^n$ and 
 $\phi :\mathbb R^{n}\rightarrow\mathbb R$. 
 On setting 
 \be
 {\cal  A}(\uu)=\nabla_{u}(\phi (\uu)\uu)=\uu\otimes\nabla_{u}\phi(\uu) + \phi(\uu){\bf  
I},
\eqn{A}
\ee
the  
characteristic polynomial for ${\cal A}(\uu)$ is given by
\begin{eqnarray} |\lambda{\bf I}-{\cal A}(\uu)|&=&|(\lambda  
-\phi(\uu)){\bf I}- \uu\otimes\nabla_{u}\phi(\uu)|\nonumber\\
&=&(\lambda -\phi(\uu))^{n}-(\lambda  
-\phi(\uu))^{n-1}tr((\uu\otimes\nabla_{u}\phi(\uu))\nonumber\\
&=&(\lambda -\phi(\uu))^{n-1}(\lambda - \phi (\uu)-\uu  
\cdot\nabla_{u}\phi (\uu)),  
\eqn{char}
\end{eqnarray}     
and its eigenvalues   are 
 \be
\lambda_{i}=\left\{\begin{array}{ll}
\phi(\uu)\equiv\lambda,& 1\leq i\leq n-1,\\
\phi(\uu)+\uu\cdot\nabla_{u}\phi(\uu)\equiv\mu,&i=n.  
\end{array}
\right. 
\eqn{eival}
\ee

Though at least two eigenvalues are equal for $n>2$, and the system is  then never strictly hyperbolic, it is simpler to use this terminology in the case that all eigenvalues are equal ($\uu\cdot\nabla_{u}\phi(\uu)=0$) for all $n\geq 2$, which defines $\Sigma$ below.
The corresponding first $n-1$ right eigenvectors of ${\cal A}(\uu)$
now satisfy the relations 
\be 
(\phi(\uu){\bf I}-{\cal A}(\uu))\rr_{i} =  
-\uu\otimes\nabla_{u}\phi(\uu)\rr_{i} =-\uu(\nabla_{u}\phi\cdot\rr_{i}),   1\leq i\leq n-1,
\eqn{n-1}
\ee
while the remaining eigenvector, $\rr_{n},$ satisfies
\be \begin{array}{ll}
((\phi(\uu)+\uu\cdot\nabla_{u}\phi  
(\uu)){\bf I}-{\cal A}(\uu))\rr _{n}& =
(\uu\cdot\nabla_{u}\phi (\uu){\bf  
I} -\uu\otimes \nabla_{u}\phi (\uu))\rr_{n} \\ 
&=(\uu\cdot\nabla  
_{u}\phi)\rr_{n}-\uu (\nabla_{u}\phi\cdot\rr_{n}).
\end{array}
\eqn{n}
\ee

 For $1\leq i\leq n-1,$ the $\rr_{i}$'s  can  be chosen  
as a set of linearly independent vectors belonging to the $n-1$ dimensional tangent space 
$\nablu\phi^{\perp}\equiv\{\vv\in\RR^n, \nabla_{u}\phi (\uu)\cdot\vv=0\}$ by \re{n-1},
while from \re{n} $\rr_{n}$ must be  
proportional to $\uu$, and so ${\cal A}(\uu)$ has a complete set of eigenvectors,
unless possibly $\uu\cdot\nabla _{u}\phi=0$. 

Let us define $\Sigma$ as the set
 \be\Sigma =  
\{{\bf u}\in \mathbb R^{n}: \uu\cdot\nablu\phi=0\},
\eqn{Sigma}
\ee 
on which $\lambda=\mu$. We then have two cases, one for $\nablu\phi|_\Sigma={\bf 0}$, which again leads to a complete set of eigenvectors, and the other for $\nablu\phi|_\Sigma\neq{\bf 0}$, which produces a right eigenvector deficiency since one must then have ${\bf r}_n\in\nabla_u\phi^\perp$,  by \re{n}.

In the same way, the first $n-1$ left eigenvectors, ${\bf  
l}_{i}$, belong to  $\uu^{\perp}\equiv\{\vv\in\RR^n, \uu\cdot\vv=0\}$, and  ${\bf l}_{n}$ is  either
proportional to $\nablu\phi$ unless $\uu\cdot\nabla _{u}\phi=0$, or ${\bf l}_{n}\in\uu^{\perp}$ if both $\uu  
.\nabla _{u}\phi=0$ and $\nabla_{u}\phi\neq{\bf 0}.$ In particular, a left eigenvector deficiency  occurs  on $\Sigma$ for $\nablu\phi|_\Sigma\neq{\bf 0}$.

Noting that the   characteristic fields $\lambda_i=\lambda$, $1\leq i\leq n-1$, satisfy  
$\rr_{i}\cdot\nablu\lambda=\rr_{i}\cdot\nablu\phi=0$,
 they are  everywhere linearly degenerate, (\ct {KK}). The $n^{\rm th}$  
characteristic field, $\lambda_n=\mu$, satisfies  
$\rr_{n}\cdot\nablu\mu\propto\uu\cdot\nablu(\phi+\uu\cdot\nablu 
\phi)$, except where $ \uu\cdot\nablu\phi =0$, which  reduces the right side  to the form ${\bf u}^T\cdot D_u^2(\phi)\cdot{\bf u}$.
As a result, the eigenvalue $\mu$ is  linearly degenerate on the set 
\be
\Upsilon= \{{\bf u}\in \mathbb R^n\setminus\Sigma:  
\uu\cdot\nablu(\phi+\uu\cdot\nablu\phi)=0\}\cup\{{\bf u}\in \Sigma:  
{\bf u}^T\cdot D_u^2(\phi)\cdot{\bf u}=0\}.
\eqn{Upsilon}
\ee

  Setting $\uu\equiv r\Theta, r>0, \Theta\in{\mathbb S}^{n-1}$, the eigenvalue coincidence set $\Sigma$, on which  $\lambda=\mu$, is denoted by
 $$\Sigma=\{(r, \Theta)\in{\mathbb R_+}\times{\mathbb S^{n-1}}: r\phi_r=0\},$$
 and the set $\Upsilon$, where $\mu$ becomes linearly degenerate, by
\be \Upsilon= \{(r, \Theta)\in{\mathbb R_+}\times{\mathbb S^{n-1}}\setminus\Sigma:  
r(r\phi)_{rr}=0\}\cup\{(r, \Theta)\in{\mathbb R_+}\times{\mathbb S^{n-1}}\in\Sigma:  
r^2\phi_{rr}=0\}.
\eqn{rupsilon}
\ee

We note that in results  motivated by elasticity (Keyfitz and Kranzer \ct{KK}, Liu and Wang \ct{LW}, Temple {\cite{Temple1983}) it is assumed that $\Upsilon=\{\emptyset\}$, $n=2$ and $\phi(r, \theta)\rightarrow \infty$ as $r\rightarrow 0+$ or $r\rightarrow +\infty$. 
With $\phi$ convex in $r$, $\Sigma$ then becomes a simple closed curve about $r=0$, and an eigenvector deficiency occurs when $\phi_\theta\neq 0$. \\

In this paper, we consider the consequences of $\mu$ being invariant along  $\mu-$characteristics. This restricts the functional dependence of $\phi(\uu)$ to two particular forms, ${\phi}_1(rK(\Theta))$, with $\phi_1:{\mathbb R}\rightarrow{\mathbb R}$  and $K:{\mathbb S}^{n-1}\rightarrow{\mathbb R},$ or ${\phi}_2(\Theta)$, with ${\phi}_2:{\mathbb S}^{n-1}\rightarrow{\mathbb R}$. This is the subject of the next section, after which we examine breakdown of solutions from smooth initial data, then the structure of formally self-similar solutions to the Riemann problem, which finally motivates a deeper examination of solutions with delta-shocks. Various applications are examined including models from chromatography and pressureless gas dynamics.

\section{Basic Results and Applications}

 It is  convenient to set $\uu = r\Theta$, where $r=|\uu|$, 
$\Theta \in S^{n-1}$. Writing the Keyfitz -  Kranzer system, \re{KK}, as 
 \be
 (r\Theta)_t+(\phi r\Theta)_x={\bf 0},
 \eqn{KKp}
 \ee
  gives
 \be
 \Theta(r_t+(\phi r)_x)+r(\Theta_t+\phi\Theta_x)={\bf 0}.
 \eqn{KKab}
 \ee
 Formally using the fact that $|\Theta|=1$, it is clear that for smooth $\Theta$ and $r>0$, 
 \be
 r_t+(\phi r)_x = 0
 \eqn{KKa}
 \ee
 and
 \be
 \Theta_t+\phi\Theta_x={\bf 0}.
 \eqn{KKb}
 \ee
 As a result, $\phi$ and $\Theta$ are  invariants, remaining constant on  the $\mu$, or $\lambda$, characteristics, respectively, since
\be 
D_\mu \phi \equiv (\partial_t+\mu\partial_x)\phi=0, \quad D_\lambda \Theta \equiv (\partial_t+ \lambda\partial_x) \Theta= {\bf 0}.
\eqn{inv}
\ee
 Equation \re{inv}i) follows  from  \re{eival} since $$\phi_t=\nabla_u\phi\cdot\uu_t=-\nabla_u\phi\cdot(\phi\uu)_x=-\phi_x\nabla_u\phi\cdot\uu -\phi\uu_x\cdot\nabla_u\phi,$$
 or
 $$ \phi_t+(\phi+r\phi_r)\phi_x=0,$$
 where $\mu=\phi+r\phi_r$ gives
 \be 
 D_\mu\lambda=0,
 \eqn{Riemann}
 \ee
 while equation \re{inv}ii)  comes directly from \re{eival} and \re{KKb}, so 
 \footnote{  We  note that  from  \re{Riemann2} and \re{polar} below,    
 \be
 D_\lambda \vartheta_i = (\partial_t+ \lambda\partial_x) \vartheta_i = 0, 1\leq i\leq n-1, \mbox{or}\, D_\lambda\vartheta=0.
 \eqn{Rpolar}
 \ee }
  \be 
  D_\lambda\Theta={\bf 0}.
  \eqn{Riemann2}
  \ee

The unit vector field $\Theta (t, x)$ can be written in terms of   polar  
coordinates 
$(\vartheta_1,\dots ,\vartheta_{n-1})\in\RR^{n-1}$, 
  which we   denote simply by $\vartheta (t, x)$,  with  $\vartheta_i\in [0, \pi], 1\leq i\leq n-2$, and $\vartheta_{n-1}\in[0, 2\pi)$, and for $\Theta=\Sigma_{i=1}^n\Theta_i{\bf e}_i$ in the standard euclidean basis, 
\be
 \Theta_{1}\equiv{\rm cos }\vartheta_{1},  \quad
\Theta_{j}\equiv\prod_{k=1}^{j-1}{\rm sin}\vartheta_{k}{\rm cos}\vartheta_{j} \,\,(2\leq j\leq n-1), \quad
 \Theta_{n}\equiv\prod_{k=1}^{n-1}{\rm sin}\vartheta_{k}.
 \eqn{polar}
 \ee 
 Where helpful, we may   choose between    representations of composed functions such as ${\cal F}=F\circ\Theta$, for  ${F}:  S^{n-1}\rightarrow\RR$, and   ${\cal F}: \RR^{n-1}\rightarrow\RR$. In such cases,  properties involving periodicity of ${\cal F}(\vartheta)$ will follow from those of $F ({\bf \Theta(\vartheta)})$ and \re{polar}. 

 
 Next we observe  a further invariant.
\begin{lemma}Let $r>0$. Then
\be D_{\lambda}(r^{-1}|\Theta_x|)=0=D_\lambda (r^{-1}|\vartheta_x|).
\eqn{link}
\ee
\end{lemma}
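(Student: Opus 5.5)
We differentiate the transport equation \re{KKb} in $x$ and combine the result with the scalar conservation law \re{KKa}. Both $\Theta_x$ and $r$ will turn out to obey the same linear equation along $\lambda$-characteristics, so their ratio is invariant. Throughout we assume $r>0$ and that $\Theta$ is smooth ($C^2$ suffices), as in the derivation of \re{KKa}--\re{KKb}.

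First, differentiating \re{KKb} with respect to $x$ gives
\be
\Theta_{xt}+\phi\Theta_{xx}+\phi_x\Theta_x={\bf 0},\quad\mbox{i.e.}\quad D_\lambda\Theta_x=-\phi_x\Theta_x,
\ee
since $\lambda=\phi$ by \re{eival}. Second, \re{KKa} written in nonconservative form is $r_t+\phi r_x=-\phi_x r$, that is, $D_\lambda r=-\phi_x r$. Hence, wherever $r>0$,
\be
D_\lambda\left(r^{-1}\Theta_x\right)=r^{-1}D_\lambda\Theta_x-r^{-2}\Theta_x D_\lambda r=-\phi_x r^{-1}\Theta_x+\phi_x r^{-1}\Theta_x={\bf 0}.
\ee
So the vector $r^{-1}\Theta_x$ is constant along $\lambda$-characteristics, and therefore so is its norm, $r^{-1}|\Theta_x|$. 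This step needs no differentiation of $|\cdot|$: we apply the norm to a quantity already shown to be invariant along each characteristic curve. This proves the first equality in \re{link}.

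For the second equality we argue in the same way using \re{Rpolar}, which says $D_\lambda\vartheta_i=0$. Differentiating in $x$ gives $D_\lambda(\vartheta_i)_x=-\phi_x(\vartheta_i)_x$ for each $1\le i\le n-1$. The computation above then shows $D_\lambda(r^{-1}\vartheta_x)={\bf 0}$ componentwise, so $D_\lambda(r^{-1}|\vartheta_x|)=0$, with $|\vartheta_x|$ the Euclidean norm in $\RR^{n-1}$.

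The main obstacle is interpretive rather than computational: we must fix what $|\vartheta_x|$ means. If it were meant as the metric length $|\Theta_x|$, the second equality would simply be the first restated. Read literally as the Euclidean norm, it still follows, because every component $(\vartheta_i)_x/r$ is separately invariant. The remaining care concerns the polar chart. The formula $D_\lambda(\vartheta_i)_x=-\phi_x(\vartheta_i)_x$ requires $\vartheta$ to be differentiable, which fails at coordinate singularities (where some $\sin\vartheta_k=0$) and across the cut $\vartheta_{n-1}\in[0,2\pi)$. We handle this by working locally with a smooth lift of $\vartheta$. Since \re{Rpolar} holds for each smooth branch and $(\vartheta_i)_x$ is independent of the choice of branch, the invariance holds away from these degenerate sets.
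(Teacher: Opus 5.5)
Your argument is correct and is essentially the paper's proof: differentiate \re{KKb} in $x$, combine the result with $D_\lambda r=-r\phi_x$ from \re{KKa}, and repeat the computation with \re{Rpolar} for $\vartheta$. The only difference is that you show the vector $r^{-1}\Theta_x$ itself is invariant, whereas the paper dots with $\Theta_x$ and takes logarithms; your version therefore avoids the paper's tacit assumption $\Theta_x\neq{\bf 0}$.
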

\begin{proof}  Combining $ D_\lambda \Theta_x+\phi_x\Theta_x={\mathbf 0}$ with  $D_\lambda r+r\phi_x=0$ gives
$$\Theta_{xt}+\phi\Theta_{xx}=[(\ln r)_t+\phi (\ln r)_x]\Theta_x$$
 which, taking the inner product of both sides with $\Theta_x$, leads to 
 $(\ln (r^{-1}|\Theta_x|)_t+\phi\, (\ln (r^{-1}|\Theta_x|)_x=0$. A similar calculation using \re{Rpolar} completes the proof.
 \end{proof}

 We  proceed using the following {\em structural} assumption.

\begin{definition}
 A constitutive hypothesis governing  $\phi$  is called {\em reduced} if this leads to classical solutions ${\bf u}(t, x)$  satisfying $D_\mu\mu=0$ for arbitrary, smooth, initial data ${\bf u}_0(x)$.  
 \eqn{reduced}
 \end{definition}
  \begin{remark}
 Although  $\lambda$ satisfies $ D_\mu\lambda=0$ identically, $\mu=(r\phi)_r$ is generally not   constant along $\mu$-characteristics.   Since $D_\mu (\mu-\lambda)=0$, however, in reduced systems  $\mu$-characteristics cannot  enter $\Sigma$ if $\mu-\lambda$ is initially nonzero.
 \end{remark}

 There exist nontrivial constitutive conditions  which lead to $\phi(\uu)$ being reduced.
  Denoting by $\partial_x\vartheta\cdot\nabla_\vartheta$  the operator $\Sigma_{i=1}^{n-1}\partial_x\vartheta_i\,\partial_{\vartheta_i}$, we have
 
\begin{lemma}
Let $\uu(t, x)$ be a smooth solution to equation \re{KK}. Then for $r>0$,
\be
D_\mu\mu=r\left\vert \begin{array}{ll}
\lambda_r & \partial_x\vartheta\cdot\nabla_\vartheta\lambda \\
\mu_r & \partial_x\vartheta\cdot\nabla_\vartheta\mu
\end{array}\right\vert,
\eqn{Dmu}
\ee
where the notation on the right stands for
\be
r\Sigma_{i=1}^{n-1}\;\partial_x\vartheta_{i}\cdot\left\vert \begin{array}{ll}
\lambda_r & \lambda_{\vartheta_i} \\
\mu_r & \mu_{\vartheta_i}
\end{array}\right\vert .
\ee
\end{lemma}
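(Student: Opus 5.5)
Since $\mu = \mu(r,\vartheta)$ and $\lambda=\phi(r,\vartheta)$ are functions of the polar variables, the plan is to compute $D_\mu\mu$ by the chain rule and then use the evolution equations for $r$ and $\vartheta$, together with the invariance $D_\mu\lambda=0$ from \re{Riemann}. The chain rule gives
\[
D_\mu\mu=\mu_r\,D_\mu r+\sum_{i=1}^{n-1}\mu_{\vartheta_i}\,D_\mu\vartheta_i .
\]
Each material derivative is then rewritten along $\lambda$-characteristics:
\[
D_\mu r=D_\lambda r+(\mu-\lambda)r_x,\qquad D_\mu\vartheta_i=D_\lambda\vartheta_i+(\mu-\lambda)\partial_x\vartheta_i=(\mu-\lambda)\partial_x\vartheta_i ,
\]
where the last equality uses \re{Rpolar}. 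From \re{KKa}, $D_\lambda r=-r\phi_x=-r\lambda_x$, and since $\mu-\lambda=r\lambda_r$ by \re{eival},
\[
D_\mu r=-r\lambda_x+r\lambda_r r_x .
\]
Expanding $\lambda_x=\lambda_r r_x+\dirthx\lambda$, the $r_x$ terms cancel, leaving $D_\mu r=-r\,\dirthx\lambda$. Hence
\[
D_\mu\mu=-r\mu_r\,\dirthx\lambda+r\lambda_r\,\dirthx\mu ,
\]
which is exactly $r(\lambda_r\,\dirthx\mu-\mu_r\,\dirthx\lambda)$, the determinant in \re{Dmu}. Written componentwise this is the sum over $i$ of $\partial_x\vartheta_i$ times the $2\times2$ minors.

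As a consistency check, the same computation applied to $D_\mu\lambda$ gives $-r\lambda_r\dirthx\lambda+r\lambda_r\dirthx\lambda=0$, in agreement with \re{Riemann}.

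The main obstacle is justifying the use of $\vartheta$ and of \re{Rpolar}. The polar coordinates \re{polar} are singular where some $\sin\vartheta_k=0$, so I would argue locally on charts, or work directly with $\Theta$ and tangential derivatives on $S^{n-1}$, where $D_\lambda\Theta=\mathbf 0$ is already available from \re{Riemann2}. The hypothesis $r>0$ is needed so that \re{KKa} and \re{KKb} hold.
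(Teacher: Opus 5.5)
Your computation is correct and is essentially the paper's proof: both expand by the chain rule and substitute \re{KKa} and \re{Rpolar}. The difference is only in bookkeeping. You first isolate $D_\mu r=-r\,\dirthx\lambda$ and $D_\mu\vartheta=r\lambda_r\,\partial_x\vartheta$, which gives $D_\mu g=r(\lambda_r\,\dirthx g-g_r\,\dirthx\lambda)$ for any $g(r,\vartheta)$ and recovers \re{Riemann} rather than assuming it, whereas the paper invokes $D_\mu\lambda=0$ to reduce to $D_\mu(r\phi_r)$ and expands that directly.
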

\begin{proof}
Recall from \re{Riemann}, $D_\mu\lambda=0$, where $\mu=\phi+r\phi_r$ and $\lambda=\phi$, thus $D_\mu\mu=D_\mu (r\phi_r)=(r\phi_r)_t+(\phi+r\phi_r)(r\phi_r)_x$. Together with equations \re{KKa} and \re{KKb}, we find 
$$
D_\mu(r\phi_r)=r_t\phi_r+r(\phi_r r_t+ \dirtht\phi_r)+\mu r_x\phi_r+\mu r(\phi_{rr}r_x+\dirthx\phi_r).
$$
But by equations \re{KKa} and \re{Rpolar}, this becomes 
$$
-(\phi r)_x\phi_r+r(-\phi_r (\phi_r r)_x-\phi\,\dirthx\phi_r)+\mu r_x\phi_r+\mu r(\phi_{rr}r_x+\dirthx\phi_r)
$$
which simplifies to
\be
r\partial_x\vartheta\cdot( \phi_r\nabla_\vartheta(r\phi_r)-(r\phi_r)_r\nabla_\vartheta\phi ),
\eqn{comm1}
\ee
or
\be
r\partial_x\vartheta\cdot( \phi_r\nabla_\vartheta(\phi+r\phi_r)-(\phi+ r\phi_r)_r\nabla_\vartheta\phi ),
\eqn{comm2}
\ee
thus establishing the result.
\end{proof}

\begin{lemma}\label{satz}
Let $\uu(t, x)$ be a smooth solution to equation \re{KK} subject to the ansatz 
$$\left\vert \begin{array}{ll}
\lambda_r & \partial_x\vartheta\cdot\nabla_\vartheta\lambda \\
\mu_r & \partial_x\vartheta\cdot\nabla_\vartheta\mu
\end{array}\right\vert=0.
$$
Then for $r>0$ this admits  three (potentially overlapping)  reduced forms for $\lambda=\phi(r, \vartheta)$ and $\mu$.
 Introducing a general function, ${\cal K}(\vartheta)$, these are 
 \begin{enumerate}
 \item $\lambda=\phi (r),\quad \mu=(r\phi(r))_r$,
 \item $\lambda=\phi (z), \quad \mu=(z\phi(z))_z,\quad z=r{\cal K}(\vartheta)$,
 \item $\lambda=\mu= \phi (\vartheta)$.
 \end{enumerate}

 The individual cases reduce $\Sigma$  to
   \begin{enumerate}
 \item $\{{\bf u}\in \mathbb R^{n}, r\phi_r(r)=0\}$,
 \item $\{{\bf u}\in \mathbb R^{n},\; z\phi_z(z)=0: i)\; {\cal K}(\vartheta)=0$, or ii) ${\phi}_z(z)=0\}$, 
 \item $\{\mbox{all }{\bf u}\in \mathbb R^{n}\}$,
 \end{enumerate}

and $\Upsilon$ becomes
    \begin{enumerate}
 \item $\{{\bf u}\in \mathbb R^{n}, r(r\phi(r))_{rr}=0\}$,
 \item $\{{\bf u}\in \mathbb R^{n},\; z(z\phi(z))_{zz}=0: i)\; {\cal K}(\vartheta)=0$, or ii) $(z\phi(z))_{zz}=0\}$, 
 \item $\{\mbox{all }{\bf u}\in \mathbb R^{n}\}$.
 \end{enumerate}
 \eqn{ansatz}
 \end{lemma}
 
 \begin{proof}
 When  dependence in $\phi(r, \vartheta)$ is reduced to either  $\phi(r)$, or to $\phi(\vartheta)$,    \re{Dmu} immediately shows $D_\mu\mu=0$, in which case the eigenvalues take the forms stated for  Case $1.$, or for Case $3.$, with  corresponding conclusions  following   for the sets $\Sigma$ and $\Upsilon$.
 
 We obtain the remaining case $2.$, assuming  $r\phi_r$ and $ \nabla_\vartheta\phi$ to be nonzero,
 which means the expression in \re{comm1} must be zero for all $r>0$ and all $\partial_x\vartheta$. Hence
 \be
 \phi_r\nabla_\vartheta(r\phi_r)\equiv(r\phi_r)_r\nabla_\vartheta\phi.
 \eqn{iden1}
 \ee
 This implies that $\partial_r((r\phi_r)^{-1}\nabla_\vartheta\phi)={\bf 0}$ and so
  \be
 \nabla_\vartheta\phi=r\phi_r{\bf v}(\vartheta)
 \eqn{a}
 \ee
where  the vector-field ${\bf v}(\vartheta)$ is
 arbitrary.  Moreover, ${\bf v}(\vartheta)$ is seen to be a gradient,  
 \be \nabla_\vartheta\phi=r\phi_r\nabla_\vartheta h(\vartheta),
 \eqn{h}
 \ee
  for some scalar field $h(\vartheta)$, since the components, \{$v_i(\vartheta), 1\leq i\leq n-1$\} of ${\bf v}(\vartheta)$ satisfy $v_{i,j}=v_{j,i}, 1\leq j\leq n-1$, where $_{,j}=\frac{\partial}{\partial\vartheta_j}$, as  a result of using \re{a} and \re{iden1}, which give
  \be
  v_{i,j}-v_{j,i}=\frac{-\phi_{,i}(r\phi_r)_{,j}+\phi_{,j}(r\phi_r)_{,i}}{(r\phi_r)^2}=\frac{r(r\phi_r)_r}{(r\phi_r)^3}(-\phi_{,i}\phi_{,j}+\phi_{,j}\phi_{,i})=0,
\eqn{v}
  \ee
  then making use of Poincar\'{e}'s Lemma.

  To find the dependence of $\phi$   if satisfying \re{h}, we  denote differentiation along
  a parametrized
curve $\gamma: \{r=r(s), \vartheta=\vartheta(s), s\in\RR\}$
by $d_s$. For a reduced system,
 \begin{eqnarray}\nonumber
 d_s\phi=&\phi_rd_sr+d_s\vartheta\cdot\nabla_\vartheta\phi\\\nonumber
 =&\phi_r(d_sr+rd_s\vartheta\cdot\nabla_\vartheta h(\vartheta))\\\nonumber
 =&r\phi_r(r^{-1}d_sr+d_sh(\vartheta))\\
 =&r\phi_rd_s\ln (r|\KK(\vartheta)|),\,  \eqn{chain}
 \end{eqnarray} 
 where $e^{h(\vartheta)}\equiv |\KK(\vartheta)|$.
 Thus $\phi$ remains constant either where $\phi_r=0$, as earlier, or on curves of constant $z=r\KK(\vartheta)$, with $\KK(\vartheta)\equiv K(\Theta\circ\vartheta)$  an arbitrary scalar field, at present, as can be verified by substituting in \re{iden1}.

Finally, with $\phi=\phi(z)$, the set $\Sigma$ now appears where $r\phi_r=r\partial_r\phi(r\KK(\vartheta)) =r\KK(\vartheta)\phi'(r\KK(\vartheta))$ $=z\phi_z(z)=0$, with  similar calculations leading to  the  results $z(z\phi)_{zz}=0$  in $\Upsilon\setminus\Sigma$ and $z^2\phi_{zz}=0$ in $\Upsilon\cap\Sigma$.
 \end{proof}
 
 Although  Case 1. clearly belongs under Case 2.   (with $\KK(\vartheta)=1$ above), here we will mainly focus on  $\phi$ having nontrivial angular dependence in which  the presence of zeros of $\KK(\vartheta)$ turns out to be necessary  for unbounded solutions to occur. In Case 1. all solutions remain bounded.
 
 Examples depending constitutively upon particular forms of $z$  appear in many  applications, and significant to many of these applications,  $K(\Theta)$ is given by an inner product, $z=r\,{\bf a\cdot\Theta}$  analyzed originally in \ct{K} with ${\bf a}=(1/2, 0)$ and $\phi(z)=z$,   later in \ct{YZ1} with ${\bf a}=(a, b)$ and $\phi'(z)>0$, and ${\bf a}=(a, b, c)$ with wave interactions in \ct{WS}, and elsewhere. Applications to chromatography appear in \ct{Su} for $\phi'(z)<0$, with ${\bf a}=(1, 1), \phi(z)=1/(1+z)$,  and in \ct{CY1}  with ${\bf a}=(-1, 1), \phi(z)=(2+z)/(1+z)$, both for $u_1, u_2 \geq 0$.  Most of these problems lead to solutions involving $\delta$-shocks,  as in the work of  Shelkovich,  \ct{S}, and others, extending  the system to arbitrary dimensions, 
 
 $$\partial_t u_i + \partial_x (u_i f_i (\mu_1u_1+\dots \mu_nu_n)=0, \quad1\leq i\leq n.$$

 \noindent
where it is shown there are solutions of the type $u_i=\hat{u}_i+e_i\delta(\Gamma)$, and $\Gamma$ denotes a discontinuity surface $\{(t, x):S(t, x)=0\}$ on which $\mu_1e_1(t, x)+\dots \mu_ne_n(t, x)=~0$. An example  not  involving a dot product is given in  \ct{Shen}, \ct{WLZH}, for  $z=r\sqrt{\Theta_1\Theta_2}, \, \Theta_i\geq 0, \,i=1, 2$, with $\delta$-shocks possible where $\Theta_1\Theta_2=0$.

 Examples in Case 3., $\phi=\phi(\Theta)$, include \ct{YZ2} in which $n=2$ and $\phi(\Theta)=\tilde{\phi}({\Theta_2}/{\Theta_1})$,  for $\Theta_1>0.$ These authors also considered a geometric optics model due to Engquist and Runborg (\ct{ER1}) for $\phi(\Theta)=\Theta_1$. Generalizations of zero pressure gas dynamics systems also fall under the same category (\ct{Sheng}, \ct{Y1}, \ct{LY}),
 \begin{eqnarray}\nonumber
 \rho_t+(\phi(m/\rho)\rho)_x=0,\nonumber\\
 (m)_t+(\phi(m/\rho)m)_x=0,\nonumber
 \end{eqnarray}
 where $\rho$ and $m$ represent density and momentum, respectively, since the correspondence $(\rho, m)=(u_1, u_2)$ gives
  \begin{eqnarray}\nonumber
  {u_1}_t+(\phi({u_2}/{u_1})u_1)_x=0,\nonumber\\
  {u_2}_t+(\phi({u_2}/{u_1})u_2)_x=0,\nonumber
  \end{eqnarray}
with ${u_2}/{u_1}={\Theta_2}/{\Theta_1}$, and the euler equations are recovered when $\phi$ is the identity function (\cite{LeVeque2004}). Motivated by an early paper of Kraiko (\cite{Kraiko}), the corresponding 3x3 system for mass, momentum and energy conservation (with internal energy) is treated in \cite{NilssonShelkovich2011_ApplAnal_I} and \cite{NilssonRozanovaShelkovich2011_ApplAnal_II}.
Further results on general distribution solutions can be found, for example, in \cite{CLi}, \cite{D4e}, \cite{Sever2002_JDE}, \cite{Sev}.

In the remainder of this paper, we analyze the cases

$$
\phi=\phi(r),\qquad
\phi=\phi(rK(\Theta)),\qquad
\phi=\phi(\Theta),
$$
which provide a  form of classification framework for reduced $\phi$ (\cite{SV}). The first  of these has an interpretation (\ct{KK}) relating the distinguished entropy
 $
\Psi(r)=\int_0^r s\,\phi(s)\,ds
$
to the total energy (kinetic plus strain) of an isotropic elastic string. This provides a link between constitutive properties for the elasticity system and inequalities across discontinuities in the Keyfitz-Kranzer model. A similar relation exists for a string  made of a nonisotropic material, with $r$ being replaced by $rK(\Theta)$, while the final case has no elasticity analogue but is found for pressureless gases.

In each of these cases, the issue appears that singularities in the form of Dirac masses are not generally defined under nonlinearities (the well-known $\delta^2$ question). In particular, for the case $z=ra\cdot\Theta$, this forces such solutions to escape to infinity in directions orthogonal to $a$ (\ct{S}), but otherwise allowing unrestricted left and right states for Riemann problems. Here we will find that our generalization to the form $z=rK(\Theta)$,  imposes a restriction that  left and right states may need to lie on an appropriate  manifold containing the states. In the absence of zeros of $K(\Theta)$, including the \lq r\rq\, case, such singularities do not appear. We also observe singularities, including those found in  known cases, in the broader context of $\Phi$ dependence on $\Theta$.

\section{ Breakdown of Smooth Solutions}

On examining   the evolution of initially smooth solutions, $\uu (t, x)$, to \re{KK},   potentially forming singularities in finite time, (\ct{KK}, \ct{KK2}), we  first observe basic properties of solutions  which can be understood 
particularly simply when $\phi$ is reduced. Subsequently, we address solutions for $\uu\notin\Sigma\cup\Upsilon$, or which lie in $\Sigma\setminus\Upsilon$, or in $\Sigma\cap\Upsilon$.

\subsection{Basic Results for Cases 1. and 2.}

As a result of  \re{reduced}, $D_\mu\mu=0$, it follows,  for $\mu_0(\beta)\equiv\mu(z_0(\beta))\in C^1$, and $z_0(\beta)\equiv z(0, \beta)$, that if the condition  $\mu_0'(\beta)=\mu_{z}(z_0(\beta))z_0'(\beta)<0$  is satisfied for some $\beta\in{\mathbb R}$, there exists a  maximal time $t^*~<~\infty$ by which $\mu_x (t, x(t))$ blows up along a related characteristic curve $\{x(t): \frac{dx}{dt}=\mu (t, x(t)),  \, x(0)=\beta, t\geq 0\}$. This comes by differentiating and showing $D_\mu\mu_x^{-1}=1$ so that $\mu_x^{-1}(t, x(t))=(\mu'_0(\beta))^{-1}(1+\mu'_0(\beta)t)$, while $\mu$ remains constant, $\mu(t, x(t))=\mu_0(\beta)$. Note that for $\mu_{z}(z_0(\beta))=0$, initial data lies in $\Upsilon$, and  $\mu_x (t, x(t))\equiv 0, t>0$.

Similarly, by equation \re{Riemann}, $D_\mu\lambda=0$, and  given $\lambda_0(\beta)\equiv\lambda(0, \beta)\in C^1$,
 one finds $D_\mu\ln|\lambda_x|=-\mu_x=-\mu_xD_\mu\mu_x^{-1}=D_\mu\ln|\mu_x|$. Therefore from the above,
 $\lambda_x^{-1}(t, x(t))=\lambda'_{0}(\beta)^{-1}(1+\mu_0'(\beta)t)$, where $\lambda_0'(\beta)=\lambda_{z}(z_0(\beta))z_0'(\beta)$, and any blowup of $\lambda_x$ coincides with that of $\mu_x$ unless  $\lambda_{z}(z_0(\beta))=0$, in which case the data lies in $\Sigma$, and $\lambda_x(t, x(t))\equiv 0, t>0$.

Finally, we observe that $z=r{\cal K}(\vartheta)$  is itself  invariant.
That is,
\be
D_\mu z=0,
\eqn{rz}
\ee
since
 \begin{eqnarray}\nonumber
 z_t+\mu z_x=&z_t+(\phi(z)z)_x\\\nonumber
 =&(r{\cal K}(\vartheta)_t+(\phi(z)r{\cal K}(\vartheta))_x\\\nonumber
 =&(r_t+(\phi(z)r)_x){\cal K}(\vartheta)+r({\cal K}(\vartheta)_t+\phi(z){\cal K}(\vartheta)_x)=0,
 \eqn{test}
 \end{eqnarray} 
 by \re{KKa} and \re{KKb}. From this, differentiating \re{rz} gives us either $z_x(t, x(t))=0$ if $z'_{0}(\beta)=0$, otherwise $D_\mu z_x^{-1}=\mu'_0(\beta)$ , so that
 \be z_x^{-1}(t, x(t))=z'_{0}(\beta)^{-1}(1+\mu_0'(\beta)t).
 \eqn{z11}
 \ee 
 Note  that the possibility of blowup for $z_x$ permits data to lie in $\Sigma$, while data in $\Upsilon$ preserves $z_x$.
 
 We remark, incidentally,  that if $y=r{\cal J}(\Theta)$, then $(\eta, q)\equiv (y, \phi(z)y)$ forms an entropy-entropy flux pair,  {\em i.e.} $\eta_t+q_x=0$ for any smooth function $\cal J$ of $\Theta$,  and which reduces to \re{KKa} for ${\cal J}=1$, or to the conservative form of $D_\mu z=0$ for ${\cal J}={\cal K}$, in the calculations above.

\subsection{The Case  $\lambda\neq\mu\quad({\bf u}\in\RR^n\setminus\Sigma)$.}

In this case   $\phi$ dependence can only depend on  $z$ (or simply on $r$). Establishing
$\mu-\lambda=r\phi_r(z)=z\phi_z(z)\neq 0$, given $D_\mu (z\phi(z))=0,$  requires that $K(\Theta)\neq\bf{0}$ for $t>0.$ However, because the range of values for $\Theta$ propagates along $\lambda-$characteristics by \re{inv},  assuming that $\phi_z(z)$ and $K(\Theta)$ are uniformly bounded away from zero for all $x$ at $t=0$  ensures that classical solutions  satisfy $\lambda\neq\mu$, $t\geq 0$.
Further, given that  $D_\mu z=0$ and so $rK(\Theta)$ remains constant  (nonzero for nonzero initial data) along $\mu-$characteristics,   $r$ remains uniformly bounded above and below  in terms of initial data on any interval of existence for classical solutions. 

Next, by equation \re{link}, $D_{\lambda}(r^{-1}|\vartheta_x|)=0$ means $|\vartheta_x|$  also remains bounded. Thus, since $z_x=r_x{\cal{K(\vartheta)}}+r\nabla_\vartheta{\cal{K}(\vartheta)}\cdot\vartheta_x$, $z_x$ can blow up in finite time if and only if $r_x$ does. Finally, $r_x$ alone  blows up for $\mu_0'(\beta)<0$, by \re{z11}.

We therefore arrive at the following result, in which $\phi$ is reduced, {\it i.e.} subject to  condition \re{reduced}.

\begin{proposition} Let ${\bf u}(t, x)$ be a solution to equation \re{KK}  with initial data $\uu (0, x)=\uu_0(x)\in C^1(\RR, \RR^n\setminus\Sigma).$ Then  $C^1-$breakdown of solutions $\uu(t, x)$ is possible in finite time only through blowup of  $r_x$. Both $|\vartheta_x|$ and $r$ remain bounded on any interval of existence. 

\end{proposition}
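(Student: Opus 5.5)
Since $\uu_0$ takes values in $\RR^n\setminus\Sigma$ and $\phi$ is reduced, we work in Case 2 (Case 1 being the subcase ${\cal K}\equiv 1$): $\lambda=\phi(z)$, $\mu=(z\phi(z))_z$, $z=r{\cal K}(\vartheta)$. We also assume, as in the preceding discussion, that $\phi_z(z_0)$ and $K(\Theta_0)$ are bounded away from zero uniformly in $x$; without this the hypothesis $\uu_0\notin\Sigma$ would hold only pointwise. The plan is to show that on any interval $[0,T)$ of existence of a classical solution, all quantities built from $\uu$ other than $r_x$ are controlled by the initial data. Since $\uu=r\Theta$ and $\uu_x=r_x\Theta+r\Theta_x$, with $|\Theta_x|$ comparable to $|\vartheta_x|$ through \re{polar}, $C^1$-breakdown would then have to be carried by $r_x$.

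\emph{Invariance of the angular range and of $z$.} By \re{Riemann2} (or \re{Rpolar}), $\Theta$ is constant along $\lambda$-characteristics. Hence the set of values $\{\Theta(t,x)\}$ is contained in the closure of $\{\Theta_0(x)\}$, and $|K(\Theta)|\ge k_0>0$ persists. By \re{rz}, $z$ is constant along $\mu$-characteristics, so $z$ takes values in the range of $z_0$ and stays bounded. The identity $D_\mu(z\phi_z)=0$ then keeps $\mu-\lambda=z\phi_z(z)$ away from zero, so the solution never enters $\Sigma$, in agreement with the Remark after Definition \re{reduced}. Dividing, $r=z/{\cal K}(\vartheta)$ satisfies $\inf|z_0|/\sup|K|\le r\le \sup|z_0|/k_0$. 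This gives the bound on $r$, and in particular $r$ stays bounded away from $0$.

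\emph{Bound on $|\vartheta_x|$.} Here I would use Lemma \re{link}: $r^{-1}|\vartheta_x|$ is constant along $\lambda$-characteristics. Combined with the two-sided bound on $r$, this gives $|\vartheta_x|(t,x)\le (\sup r/\inf r_0)\sup|\vartheta_{0,x}|$. The same argument bounds $|\Theta_x|$. This step is the one I expect to need the most care. The invariant is transported along $\lambda$-characteristics while $z$ is transported along $\mu$-characteristics, so for every point $(t,x)$ both backward characteristics must exist and reach $t=0$ within the classical solution. This holds because $\lambda$ and $\mu$ are bounded and $C^1$ on $[0,T)$, so both characteristic families are globally defined there. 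A second subtlety is the polar-coordinate singularities where some $\sin\vartheta_k=0$. I would state the bound for $|\Theta_x|$, which is coordinate-free, and pass to $\vartheta$ only locally.

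\emph{Conclusion.} We have $z_x=r_x{\cal K}+r\nabla_\vartheta{\cal K}\cdot\vartheta_x$, and the second term is bounded. So $z_x$, and equivalently $\mu_x=\mu_z z_x$ and $\lambda_x$ (which blow up together, from the computations in the basic-results subsection), are unbounded iff $r_x$ is. Moreover $\uu_x$ is unbounded iff $r_x$ is. Therefore the only possible mechanism of $C^1$ breakdown is blowup of $r_x$. By \re{z11} this actually happens when $\mu_0'(\beta)<0$, namely at $t^*=-1/\mu_0'(\beta)$, which shows the statement is sharp.
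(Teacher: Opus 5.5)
Your proposal is correct and takes the same route as the paper. Both arguments use the same three steps. First, invariance of $\Theta$ along $\lambda$-characteristics and of $z=r\KK(\vartheta)$ along $\mu$-characteristics (\re{rz}) gives two-sided bounds on $r$. Second, the invariant $r^{-1}|\vartheta_x|$ from \re{link} then bounds $|\vartheta_x|$. Third, the decomposition $z_x=r_x\KK+r\nabla_\vartheta\KK\cdot\vartheta_x$ together with \re{z11} leaves $r_x$ as the only quantity that can blow up.

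Your added care is a refinement the paper leaves implicit. This includes the uniform-in-$x$ hypotheses on the data, the existence of backward characteristics, and working with the coordinate-free $|\Theta_x|$ near polar singularities.
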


We next turn to  loss of hyperbolicity.
\subsection{The Case  $\lambda=\mu\quad ({\bf u}\in\Sigma)$.} 

In this section, we will assume that there exists an initial value $x=\alpha$ for which $\lambda(r_0(\alpha), \vartheta_0(\alpha))=\mu(r_0(\alpha), \vartheta_0(\alpha))$, where $r_0(\alpha)=r(0, \alpha)$ and $\vartheta_0(\alpha)=\vartheta (0, \alpha)$.
Given that $\lambda$ and $\mu$ remain constant along  $\mu -$characteristics under the conditions of Lemma \re{ansatz}  then, if, in particular   $\phi_r(r_0(\alpha),  \vartheta(\alpha))=0$ so that $\lambda=\mu$ at $t=0$, it follows that  $\lambda(t)=\mu(t)$ for $ t>0$. We denote such a  characteristic, $\{x=x(t): \lambda(t, x(t))=\mu (t, x(t)),  \, x(0)=\alpha, t\geq 0\}$,  a $\sigma-$characteristic, or simply  $\sigma$
 when there is no confusion, and we may  denote the derivative $D_\mu$  on $\sigma$ interchangeably with $D_\lambda$, but note that generally $\partial_xD_\mu\neq\partial_xD_\lambda$  there.

For $r>0$, the conditions on initial data and $\phi (r, \vartheta)$ which produce   $\sigma$-characteristics  are then, by Lemma \re{ansatz},

\begin{enumerate}
\item $\phi=\phi(r):\; \phi_r(r_0(\alpha))=0, $
 \item $\phi=\phi(r\KK(\vartheta)):  i) \, \KK(\vartheta_0(\alpha))=0,  \mbox{ or }
 ii)\, \phi_z(r_0(\alpha)\, \KK(\vartheta_0(\alpha)))=0,$
 \item $\phi=\phi(\vartheta):\; \vartheta_0(\alpha) \mbox{ arbitrary}.$
\end{enumerate}

The evolution of solutions on $\sigma$-characteristics depends  not only on reduced solutions lying in $\Sigma$ but also on whether solutions simultaneously lie in $\Upsilon$. For initial data to lie in $\Sigma\cap\Upsilon$,  added conditions, if needed  for $r>0$, are (see \re{rupsilon})

\begin{enumerate}
\item $\phi=\phi(r):\; \phi_{rr}(r_0(\alpha))=0,  $
\item $\phi=\phi(r\KK(\vartheta)): i) \, \KK(\vartheta_0(\alpha))=0,  \mbox{ or }
ii) \,\phi_{zz}(r_0(\alpha)\,\KK(\vartheta_0(\alpha)))=0, $
\item $\phi=\phi(\vartheta):\; \vartheta_0(\alpha) \mbox{ arbitrary.} $
\end{enumerate}

\subsubsection{Subcases 1. and 2.ii)}
Provided $r>0$, the radially dependent form of $\phi$ described by Subcase 1. can be regarded as a particular case of 2.ii) for $\KK= 1$ with $z=r$,  so we will here simply consider 2.ii), observing a few differences from 1.

Recall \re{rz}, 
\be
D_\mu z=0, \quad (D_\mu r=0),
\eqn{z} 
\ee
and  \re{z11},
\be
D_\mu z_x^{-1}=\mu'_0(\alpha)
\eqn{zx}
\ee
(or the equivalent equation in $r_x$).
Now, since $z_0=r_0\KK(\vartheta_0)$, we  obtain

\begin{lemma}{Let ${\bf u}(t, x)$ be a solution to equation \re{KK}  having initial data $\uu (0, x)=\uu_0(x)\in C^1(\RR, \RR^n)$ with $\phi_z(z_0(\alpha)))=0$ and $\KK(\vartheta_0(\alpha))\neq 0$, so that $\uu_0(\alpha)\in\Sigma$ generates a corresponding $\sigma-$characteristic. On $\sigma$, breakdown of solutions $\uu(t, x)$ is possible in finite time only through blowup of $r_x$, while $\phi_x\equiv 0$. Both  $|\vartheta_x|$ and $r$ remain constant along $\sigma$ on any interval of existence. }

\end{lemma}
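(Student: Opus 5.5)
The plan is to exploit the invariants already established, namely $D_\mu z=0$ from \re{rz}, $D_\lambda\Theta={\bf 0}$ from \re{Riemann2} (equivalently \re{Rpolar}), and $D_\lambda(r^{-1}|\vartheta_x|)=0$ from \re{link}, together with the observation that on $\sigma$ the two characteristic speeds coincide.

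First I will check that $\sigma$ is simultaneously a $\mu$- and a $\lambda$-characteristic. Since $D_\mu z=0$, we have $z(t,x(t))=z_0(\alpha)$ along the $\mu$-characteristic from $\alpha$. Hence $\phi_z(z)=\phi_z(z_0(\alpha))=0$ there, so $\mu-\lambda=z\phi_z(z)=0$ all along it. The curve $x(t)$ therefore solves $\dot x=\lambda(t,x)$ with $x(0)=\alpha$. By uniqueness for this ODE (the solution is $C^1$ on the interval of existence), it is the $\lambda$-characteristic through $\alpha$. Consequently both $D_\mu$-invariants and $D_\lambda$-invariants are constant on $\sigma$. This must be done only along the curve itself: as the paper warns, $\partial_xD_\mu\neq\partial_xD_\lambda$, so no $x$-derivative of the identification will be used.

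Next come the constancy statements.
\begin{itemize}
\item By \re{Rpolar}, $\vartheta(t,x(t))=\vartheta_0(\alpha)$. Hence $\KK(\vartheta)=\KK(\vartheta_0(\alpha))\neq0$ is constant, and so is $r=z/\KK(\vartheta)=r_0(\alpha)$.
\item By \re{link}, $r^{-1}|\vartheta_x|$ is constant on $\sigma$, so $|\vartheta_x|=|\vartheta_{0}'(\alpha)|$ is constant.
\item Likewise $|\Theta_x|$ is constant, so $\Theta_x$ stays bounded.
\item For $\phi_x$, the chain rule gives $\phi_x=\phi_z(z)z_x$. Since $\phi_z(z)\equiv0$ on $\sigma$, $\phi_x\equiv0$ there as long as $z_x$ is finite, i.e. on the interval of existence.
\end{itemize}

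Finally comes the breakdown mechanism. From \re{zx}, $z_x^{-1}(t,x(t))=z_0'(\alpha)^{-1}(1+\mu_0'(\alpha)t)$, where $\mu_0'(\alpha)=\mu_z(z_0(\alpha))z_0'(\alpha)$. On $\Sigma$ we have $\mu_z=2\phi_z+z\phi_{zz}=z\phi_{zz}$, so blowup occurs when $z_0(\alpha)\phi_{zz}(z_0(\alpha))z_0'(\alpha)^2<0$. This is in particular absent if the data also lie in $\Upsilon$.

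Writing $z_x=r_x\KK(\vartheta)+r\nabla_\vartheta\KK\cdot\vartheta_x$, the second term is constant on $\sigma$ and $\KK(\vartheta)$ is a nonzero constant. Hence $z_x$ blows up if and only if $r_x$ does. Since $\uu_x=r_x\Theta+r\Theta_x$ with $r$ and $\Theta_x$ bounded, any $C^1$ breakdown of $\uu$ on $\sigma$ occurs only through $r_x$.

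The main obstacle I anticipate is the identification of $\sigma$ with the $\lambda$-characteristic, which is what justifies applying \re{link} and \re{Rpolar} along $\sigma$; it rests on the ODE-uniqueness argument above.
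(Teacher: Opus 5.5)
Your argument for the lemma is correct and is essentially the paper's: you use invariance of $z$, $\vartheta$ and $r^{-1}|\vartheta_x|$ along $\sigma$, then read off $r_x$ from $z_x=r_x\KK+r\nabla_\vartheta\KK\cdot\vartheta_x$. You differ only in getting $r$ constant from $r=z/\KK(\vartheta)$ rather than from $D_\lambda r=-r\phi_x=0$, and in making explicit two steps the paper leaves implicit: the identification of $\sigma$ as both a $\mu$- and a $\lambda$-characteristic, and the step $\uu_x=r_x\Theta+r\Theta_x$. One slip in a side remark: on $\Sigma$ one has $\mu_0'(\alpha)=z_0\phi_{zz}(z_0)\,z_0'(\alpha)$, so blowup requires $z_0\phi_{zz}(z_0)\,z_0'(\alpha)<0$, not $z_0\phi_{zz}(z_0)\,z_0'(\alpha)^2<0$, because the sign of $z_0'$ matters; this does not affect the lemma's claim.
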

\proof
Since for $\uu_0(\alpha)\in\Sigma$ we  have $\lambda=\mu$ along the  $\sigma-$characteristic, therefore $z\phi_z(z)=0$,  $z=r\KK(\vartheta)$. Because $\vartheta$ remains constant on $\sigma$, then since $\KK(\vartheta_0(\alpha))\neq 0$ it follows that $\phi_z(z)=0$ on $\sigma$. In particular $\phi_x=\phi_zz_x=0$  (also $\phi_r=\phi_z\KK(\vartheta)=0$). Further, since $D_\lambda r=-r\phi_x=-r\phi_zz_x=0$, then $r$ also remains constant, which implies in turn that $|\vartheta_x|$  remains constant on  $\sigma$, by \re{link}.

Finally, \re{z11} shows finite time blowup of $z_x$ occurs by    $t^*=-\mu_0'(\alpha)^{-1}>0$ provided that $z_0'(\alpha)\neq 0$ and $\mu_0'(\alpha)<0$, and since all terms in $z_x$ remain constant on $\sigma$ save for $r_x$, the result follows.
\qed 

Similarly,
\begin{corollary}
If $\uu_0(\alpha)\in \Sigma \cap \Upsilon$ 
with $\KK(\vartheta_0(\alpha))\neq 0$, we  have $\phi_{zz}(z_0)~=~0\, \newline (\mu_0'(\alpha)=0)$, and in this case $r_x$, as for $z_x$, remains constant on $\sigma$.
\end{corollary}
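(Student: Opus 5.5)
The plan is to build directly on the preceding lemma and to treat the corollary as the special case in which the initial data also lies in $\Upsilon$. First I will identify what membership in $\Sigma\cap\Upsilon$ means here. By that lemma, $\uu_0(\alpha)\in\Sigma$ together with $\KK(\vartheta_0(\alpha))\neq 0$ forces $\phi_z(z_0(\alpha))=0$. By Lemma \ref{satz}, Case 2, the set $\Upsilon\cap\Sigma$ is where $z^2\phi_{zz}=0$. Since $z_0=r_0\KK(\vartheta_0)\neq 0$ for $r_0>0$, this gives $\phi_{zz}(z_0)=0$. Then $\mu_z=(z\phi)_{zz}=2\phi_z+z\phi_{zz}$ vanishes at $z_0$, and so $\mu_0'(\alpha)=\mu_z(z_0(\alpha))z_0'(\alpha)=0$, whatever the value of $z_0'(\alpha)$.

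Next I will use \re{zx} along $\sigma$. With $\mu_0'(\alpha)=0$ it gives $D_\mu z_x^{-1}=0$, so by \re{z11} $z_x$ stays equal to $z_0'(\alpha)$. If $z_0'(\alpha)=0$, then $z_x\equiv 0$ directly from the discussion preceding \re{z11}. Either way $z_x$ is constant on $\sigma$.

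To pass from $z_x$ to $r_x$, I will write
$$z_x=r_x\KK(\vartheta)+r\,\partial_x\vartheta\cdot\nabla_\vartheta\KK(\vartheta)$$
and show that the second term is constant on $\sigma$.

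This last step is the main obstacle. The preceding lemma gives constancy of $r$, of $\vartheta$, and of $|\vartheta_x|$ along $\sigma$, but only the modulus $|\vartheta_x|$, not the vector $\vartheta_x$ itself. I will therefore differentiate \re{Rpolar} in $x$ to get $D_\lambda\vartheta_x=-\phi_x\vartheta_x$. On $\sigma$ we have $\phi_x=\phi_z z_x=0$, so $\vartheta_x$ is constant as a vector along $\sigma$, using that $D_\mu=D_\lambda$ on $\sigma$. Combined with constancy of $r$ and $\vartheta$, this makes $r\,\partial_x\vartheta\cdot\nabla_\vartheta\KK$ constant. Since $\KK(\vartheta)=\KK(\vartheta_0(\alpha))\neq 0$ is constant, $r_x=(z_x-r\,\partial_x\vartheta\cdot\nabla_\vartheta\KK)/\KK$ is constant on $\sigma$, which completes the proof.
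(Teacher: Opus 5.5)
Your proposal is correct and is essentially the paper's own argument. The paper gives no separate proof of the corollary (it is introduced by ``Similarly''); it relies on the preceding lemma's proof, where $\mu_0'(\alpha)=\mu_z(z_0)z_0'(\alpha)$ vanishes because $\mu_z=2\phi_z+z\phi_{zz}=0$, so $z_x$ is frozen by \re{z11}, and $r_x$ is then read off from $z_x=r_x\KK+r\,\nabla_\vartheta\KK\cdot\vartheta_x$. Your extra step uses $D_\lambda\vartheta_x=-\phi_x\vartheta_x$ with $\phi_x=0$ on $\sigma$ to show that the vector $\vartheta_x$ itself, not just $|\vartheta_x|$, is constant; this supplies a detail the paper leaves implicit in its phrase ``all terms in $z_x$ remain constant on $\sigma$ save for $r_x$''.
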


\subsubsection{Subcase 2 i).}

Here we have $\uu_0(\alpha)\in\Sigma\cap\Upsilon$, due to
  ${\cal K}(\vartheta_0(\alpha))=0$. Equations  \re{z} and \re{zx} from the previous section again apply.

\begin{lemma}{Let ${\bf u}(t, x)$ be a solution to equation \re{KK}  having initial data $\uu (0, x)=\uu_0(x)\in C^1(\RR, \RR^n)$ with  $\KK(\vartheta_0(\alpha))= 0$ $(z_0(\alpha)=0)$, such that $\uu_0(\alpha)\in\Sigma$ generates a corresponding $\sigma-$characteristic. On $\sigma$, breakdown of solutions $\uu(t, x)$ is possible in finite time  through blowup of $r$ and $|\vartheta_x|$ provided $\nabla_\vartheta\KK(\vartheta_0(\alpha))\neq{\bf 0}$. }
\end{lemma}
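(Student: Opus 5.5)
On $\sigma$ we combine the invariants already available: $D_\mu z=0$ from \re{z}, the Riccati-type law \re{zx} for $z_x$, the invariance of $\vartheta$ along $\lambda$-characteristics (which coincide with $\mu$-characteristics on $\sigma$), and the ratio invariant of \re{link}. The plan is to show that $z_x$ blows up on $\sigma$ while the factor $\KK(\vartheta)$ vanishes. This forces $r$ to carry the blowup, and \re{link} then transfers it to $|\vartheta_x|$.

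\textbf{Structure on $\sigma$.} Since $\KK(\vartheta_0(\alpha))=0$ and $\vartheta$ is constant along $\sigma$ by \re{Rpolar}, we get $\KK(\vartheta(t,x(t)))=0$ for all $t$. Hence $z=0$ on $\sigma$, consistent with $D_\mu z=0$, and $\lambda=\mu=\phi(0)$, so $\sigma$ is a straight line. Because $\mu_0(\beta)=\mu(z_0(\beta))$, we have $\mu_0'(\alpha)=\mu_z(0)z_0'(\alpha)=(\phi_z(0)\cdot 2+0)z_0'(\alpha)$. More precisely, $\mu=(z\phi)_z$ gives $\mu_z(0)=2\phi_z(0)$.

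\textbf{Blowup of $z_x$.} Differentiating $z_0=r_0\KK(\vartheta_0)$ at $\alpha$ gives
$$z_0'(\alpha)=r_0(\alpha)\,\nabla_\vartheta\KK(\vartheta_0(\alpha))\cdot\vartheta_0'(\alpha),$$
since the $r_0'$ term carries $\KK=0$. This is nonzero generically once $\nabla_\vartheta\KK(\vartheta_0(\alpha))\neq{\bf 0}$. That hypothesis is exactly what allows $z_0'(\alpha)\neq0$, and hence $\mu_0'(\alpha)\neq 0$ when $\phi_z(0)\neq 0$. Choosing the sign so that $\mu_0'(\alpha)<0$, \re{z11} gives $z_x\to\infty$ as $t\to t^*=-\mu_0'(\alpha)^{-1}$.

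\textbf{Transfer to $r$ and $|\vartheta_x|$.} On $\sigma$ we have $z_x=r_x\KK+r\nabla_\vartheta\KK\cdot\vartheta_x=r\,\nabla_\vartheta\KK(\vartheta_0(\alpha))\cdot\vartheta_x$, and $\nabla_\vartheta\KK$ is a fixed nonzero vector there. By \re{link}, $r^{-1}|\vartheta_x|=c$ is constant on $\sigma$, so $|z_x|\le C\,r|\vartheta_x|=Cc\,r^2$. Blowup of $z_x$ therefore forces $r\to\infty$, and then $|\vartheta_x|=c\,r\to\infty$.

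For the converse direction we need a formula for $r$ itself. From \re{KKa}, $D_\lambda r=-r\phi_x=-r\phi_z z_x$, so
$$D_\lambda \ln r=-\phi_z(0)z_x=-\phi_z(0)\,z_0'(\alpha)(1+\mu_0'(\alpha)t)^{-1}.$$
Integrating gives $r=r_0(\alpha)(1+\mu_0'(\alpha)t)^{-\phi_z(0)/\mu_z(0)}=r_0(\alpha)(1+\mu_0'(\alpha)t)^{-1/2}$, which blows up at $t^*$. This explicit form is cleaner than the bound above, so I will present it as the main route.

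\textbf{Main obstacle.} The key step is the $\ln r$ integration: it requires $\phi_z(0)\ne0$, and one must make sure $\mu$ and $\lambda$ characteristics agree along $\sigma$, so that $D_\lambda$ may be replaced by $D_\mu$ when applying \re{z11}. If $\nabla_\vartheta\KK(\vartheta_0(\alpha))={\bf 0}$, then $z_0'(\alpha)=0$, $z_x\equiv0$ on $\sigma$, and $r$ remains constant. This explains why the nondegeneracy hypothesis is needed.
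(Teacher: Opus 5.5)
Your proposal is correct and follows the paper's proof. You use the Riccati blowup of $z_x$ on $\sigma$ with $\mu_0'(\alpha)=2\phi_z(0)z_0'(\alpha)$, reduce to $z_x=r\,\nabla_\vartheta\KK(\vartheta_0(\alpha))\cdot\vartheta_x$ on $\sigma$, and then use the invariant \re{link} to bound $|z_x|$ by a multiple of $r^2$ (the paper writes it as $|\vartheta_x|^2$), which forces $r$ and $|\vartheta_x|$ to blow up by $t^*$. Your explicit formula $r=r_0(\alpha)(1+\mu_0'(\alpha)t)^{-1/2}$ is exactly the remark the paper states right after the lemma; one small correction is that $z_x\to\pm\infty$ according to the sign of $z_0'(\alpha)$, not necessarily $+\infty$.
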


\proof 
Since $D_\mu z=0$, $\KK(\vartheta_0(\alpha))= 0$,  then  $\phi_z(z)=\phi_z(0)$ on $\sigma$ 
and $\mu_0'(\alpha)=\mu_z(z_0(\alpha))z_0'(\alpha)=\mu_z(0)z'_0(\alpha)=2\phi_z(0)z'_0(\alpha)$,
and for $\phi_x(0)\neq 0$, equation \re{zx} now reduces on $\sigma$ to 
\be
D_\lambda z_x^{-1}=2\phi_{z}(0)z_0',
\eqn{nontrivialblow}
\ee
from which finite time blowup of $z_x$ occurs by  time $t^*=-(2\phi_{z}(0)z_0')^{-1}>0$ provided that $\phi_{z}(0)z_0'<0$. On $\sigma$, $z_x$ reduces to $r\nabla_\vartheta \KK(\vartheta_0(\alpha))\cdot\vartheta_x$ since $\KK(\vartheta_0(\alpha))=0$. As a result,    it holds that
\be
|z_x|=r|\nabla_\vartheta\KK(\vartheta_0)\cdot\vartheta_x|\leq r_0|\vartheta'_0 |^{-1}|\nabla_\vartheta \KK(\vartheta_0(\alpha))||{\vartheta_x}|^2
\eqn{zxbelow}
\ee
due to  \re{link}, and so $|\vartheta_x|$, and therefore  $r$, again by  \re{link},  blow up by $t=t^*$, provided 
$\nabla_\vartheta\KK(\vartheta_0)\neq{\bf 0}$.  \qed
\begin{remark}
Under the same conditions, on $\sigma$, equation \re{KK} implies $$D_\lambda\uu=-\phi_z(0)z_x\uu$$
and, as result we have the direct formula $$\uu (t, \alpha+\phi(0)t)=(1-t/t^*)^{-1/2}\,\uu_0(\alpha).$$
\end{remark}
\begin{remark}
If $\uu_0(\alpha)\in\Sigma\cap\Upsilon$, having
  ${\cal K}(\vartheta_0(\alpha))=0$ and $\nabla_\vartheta\KK(\vartheta_0(\alpha))={\bf 0}$, then $z_x=0$ and $\vartheta_x=\vartheta_0'(\alpha)$ on $\sigma$.
\end{remark}
\proof That $z_x=0$ is an immediate consequence of \re{zxbelow}, while $\vartheta_x$ remaining constant on $\sigma$ comes by using this result  and differentiating \re{Rpolar} in $x$ to find that $D_\lambda\vartheta_x=0$.\qed

\subsubsection{Subcase 3.}
\label{theta section}
\noindent In the remaining case $\phi=\phi(\vartheta)$, $\uu\in\Sigma\cap\Upsilon$ for arbitrary $\uu_0$ since $\phi_r=0$.
 So $D_\mu\mu=0$  becomes $D_\phi\phi=0$, which leads to $D_\phi (\phi_x)^{-1}=1$, from which $\phi_x\equiv \nabla_\vartheta\phi\cdot\vartheta_x\rightarrow-\infty$ in finite time $t_*=-(\nabla_\vartheta\phi\cdot\vartheta'_{0})^{-1}>0$ provided $\nabla_\vartheta\phi\cdot\vartheta'_{0}<0$. Using  equation \re{link} again, $r=r_o|\vartheta_x|/|\vartheta'_0|$, then shows that $|\vartheta_x|$, $r\rightarrow+\infty$ as $t\rightarrow t_*^-$. \qed
\begin{remark}
Under these  conditions, on $\sigma$,  equation \re{KK} here  implies $$D_\lambda\uu=-\phi_x\uu$$ where 
$\phi_x=\nabla_\vartheta\phi\cdot\vartheta'_{0}\,(1+t\nabla_\vartheta\phi\cdot\vartheta'_{0})^{-1}$, from which we obtain the local formula 
$$\uu (t, \alpha+\phi(\vartheta_0(\alpha))\,\,t)=(1+\nabla_{\vartheta{_0}}\phi(\vartheta_0(\alpha))\cdot\vartheta'_{0}(\alpha)t)^{-1}\,\uu_0(\alpha).$$
\end{remark}
\noindent See sections \ref{deltatheta} and \ref{gas} for some examples.
 \begin{remark}  
The observations above, that $r(t, x)$ may become unbounded in finite time  in cases 2i) and 3., coincides with  their having right eigenvector deficiencies.  In both cases, $\uu.\nabla_u\phi|_\Sigma=0.$ If  $\nablu\phi|_\Sigma\neq{\bf 0}$ also,  this comes from \re{n}, which then implies that  ${\bf r}_n\in\nabla_u\phi^\perp$, from which the result  follows.
\end{remark}
\noindent To see this is a consequence in case 2., we first calcuate
\be
\eqn{def1}
\nabla_u\phi=\phi'(rK(\Theta))\nabla_u(rK(\Theta)),
\ee 
where $\nabla_u(rK(\Theta))=\nabla_ur\,K(\Theta)+r\nabla_\Theta K(\Theta)\cdot\nabla_u\Theta$. Given  $\Theta=r^{-1}{\bf u}$ implies  $\nabla_u r=\Theta$ and  $\nabla_u\Theta=r^{-1}({\bf I}-r^{-2}{\bf u}\otimes{\bf u})=r^{-1}({\bf I}- \Theta\otimes\Theta)$,   we have, retaining a final (zero) term, that
\be
\eqn{def2}
\nabla_u(rK(\Theta))=\Theta\,K(\Theta)+\nabla_\Theta\,K(\Theta) - \Theta \,(\Theta\cdot\nabla_\Theta\,K(\Theta)).
\ee
\noindent In case 3., similarly,
\be
\eqn{def3}
\nabla_u\phi(\Theta)=r^{-1}(\nabla_\Theta\,\phi(\Theta) - \Theta \,(\Theta\cdot\nabla_\Theta\,\phi(\Theta))).
\ee

From \re{def1} and \re{def2}, we  recover ${\bf u}\cdot\nabla_u(\phi (rK(\Theta)))=\phi'(rK(\Theta))\,rK(\Theta)$, which is zero on $\Sigma$ in both cases 2i) and 2ii). In case 3., ${\bf u}\cdot\nabla_u\phi(\Theta)$ is identically zero by \re{def3}  (and $\Sigma$ is $\RR^n$ in this case). However, since $K(\Theta)={\bf 0}$ on $\Sigma$ in case 2i), the terms on the right of equation \re{def2}  only reduce to zero on $\Sigma$ when  $\nabla_\Theta K(\Theta)$ is parallel to $\Theta$ - in other words when $K(\Theta)$ locally behaves as a function of $|\Theta|$, which is  constant because $\Theta\in S^{n-1}$, in which case $\phi(rK)$ reduces locally to a  function of $r$ alone. A similar argument holds for case 3.,  showing the expressions $\nabla_u(rK(\Theta))$ and $\nabla_u\phi(\Theta)$ are generally nonzero in   cases 2i) and 3., appropriately.

\subsubsection{Subcase 2i)  with Subcase 2ii).}

If $\KK(\vartheta)$ is continuous and bounded away from zero for all $\vartheta$, and if $c$ has the same sign as $\KK(\vartheta)$, then we may assume that the solution set for $r\KK(\vartheta)=c$ is defined by a closed curve or surface about the origin, with $0<r<\infty$. However, if there are directions for which $\KK=0$, 
this leads to the following possibility.
Setting $C=\{c\in\RR\setminus\{0\}: \phi_z(c)=0\}$ and $D=\{d\in\RR^{n-1}: \KK(d)=0\}$,
assume  that neither set is empty, in which case $\Sigma_{D}=\{(r, d)\in\RR\otimes\RR^{n-1}: \KK(d)=0\}$ 
is unbounded in $\RR^n$. Thus the set $\Sigma=\{(r, \vartheta)\in\RR\otimes\RR^{n-1}: r\KK(\vartheta)\in C \mbox{ or } \vartheta\in D\}\equiv\Sigma_C\cup\Sigma_D$, where $\Sigma_{C}=\{(r, \vartheta)\in\RR\otimes\RR^{n-1}: \,r\KK(\vartheta)=c\in\RR\setminus\{0\}, \,\phi_z(c)=0\}$, is likewise nonempty while $\Sigma_C\cap\Sigma_D=\emptyset$. In particular if $\KK(\vartheta_d)=0$, and $\KK(\vartheta)$ is continuous in a  neighbourhood of $\vartheta_d$  where $(r, \vartheta)$ satisfies  $r\KK(\vartheta)=c$, then $r\rightarrow\infty$ as $\vartheta\rightarrow\vartheta_d$.  In this case 
 $\Sigma_C$ is also unbounded in $\RR^n$ and the presence of any zeros in $\KK(\vartheta)$ rules out $\Sigma$ forming a closed curve/surface about $r=0$.

As a result of the above remarks when, for example,  $\uu_0(\alpha)$ has $z_0\in C$, and  satisfies
     $\phi_z(r_0(\alpha)\KK(\vartheta_0(\alpha)))=0$ for all $\alpha\in [\alpha_c , \alpha_d)$ and some $\alpha_c<\alpha_d$, and ${\cal K}(\theta_0(\alpha_d))=0$  
      so that ${\bf u}_0(\alpha)\in\Sigma$ for all $\alpha\in [\alpha_c, \alpha_d]$, then $|{\bf u}_0(\alpha)|$ must be unbounded on $[\alpha_c, \alpha_d]$. 
      This results in an unbounded solution with the part of the solution originating on $\Sigma_C$  remaining bounded, since both $r$ and $\vartheta$ initially remain approximately constant, though with  $r_x$  potentially leading to blow up, but with the part originating from $\Sigma_D$ having $r(\vartheta_d, t)$ unbounded.
     
 \section{Classical Riemann solutions}
 
In this,  and in the following section, we  consider  the standard Riemann problem, with initial data 
 \be
 \uu(x, 0)=\uu_-+H(x)(\uu_+-\uu_-),\eqn{riemann}
 \ee 
 where $H(x)$ is the unit step function and $\bf u_-$ and $\bf u_+$ denote constant left ($x<0$) and right ($x>0$) states.
 In particular,  we  examine self-similar solutions of the form $\uu(x, t)=\uu(\xi),\, \xi=x/t$, 
 (see \ct{YZ1}). The results of this section  connect to  the  analysis of evolving singularities which follows.
 
 \subsection{The Case $\phi=\phi(r)$}
The radial case, $K\equiv1$, differs from that of Case 2. when using the conditions on $K$ below, and is  known to possess global $L^\infty$ solutions for $L^\infty$ initial data, (\cite{Ch},  \cite{ACFS}). Obtaining classical solutions is standard.

\subsection{The Case $\phi=\phi(z)$}
 
 Throughout this section, we will make the assumption that $\phi$ satisfies the  following hypotheses (see Lemma \ref{satz}),
\be
\mbox{ i) } \phi(\uu)\equiv\phi(z)=\phi(r\KK(\vartheta)),  \mbox{ ii) }\phi(0)=0, \mbox{ iii) } \phi_z(z)>0 \mbox{ and } \mbox{ iv) } (\phi(z)z)_{zz}>0.
  \eqn{phihyp}
 \ee
 which implies that $ z\in\Sigma\cap\Upsilon\Leftrightarrow\KK(\vartheta)=0$ (see Case {2.i)}).
 We also assume that $\KK(\vartheta)$ satisfies
 
 \be
 \mbox{ v) }\quad\quad \KK(\pi-\vartheta_1, \cdots, \pi-\vartheta_{n-2}, \pi+\vartheta_{n-1})=-\KK(\vartheta_1, \cdots, \vartheta_{n-2}, \vartheta_{n-1}),\quad
 \eqn{odd}
 \ee
 which means that  $K(\Theta)$ is odd in $\Theta$ (recall $\KK(\vartheta)\!= \!K(\Theta\,\circ\,\vartheta)$)  and, in particular, given any null-element, $\Theta_\sigma$, of $K$, its antipode, $-\Theta_\sigma$, is another. We  adopt the non-degeneracy condition 
 \be
\KK(\vartheta)={0}\Rightarrow \nabla_\vartheta\KK(\vartheta)\neq{\bf 0}.  
 \eqn{nond}
 \ee
 We begin by considering the possibility of connecting  left and right states by a single shock, contact, or rarefaction wave.
 The classical Rankine-Hugoniot conditions for \re{KK} are
 \be
 s[\uu]=[\phi(\uu)\uu].
 \eqn{RH}
 \ee
 where $s$ gives the speed of a shock, and $[\uu]=\uu_--\uu_+$ denotes the jump of $\uu$ across the discontinuity.
 Substituting $\uu=r\Theta$ in \re{RH}  gives
 
 \be
 (s-\phi(z_+))r_+\Theta_+=(s-\phi(z_-))r_-\Theta_-.
 \eqn{RH1}
 \ee
 As a result, there are two cases to consider: contact discontinuities, $ J$,  in which case,  by \re{phihyp},
 \be
 s=\phi(z_+)=\phi(z_-)  \quad \makebox{ and so  \, } r_+\KK(\vartheta_+)=r_- \KK(\vartheta_-),
 \eqn{J}
 \ee
and jump discontinuities,  $\cal{S}$,  $\phi(z_+)\neq\phi(z_-)$, where  by \re{RH1} $\Theta_+=\pm\Theta_-$.
In either  case (see \re{odd}),
\be
s=\frac{\phi(z_-)z_--\phi(z_+)z_+}{z_--z_+}.
\eqn{srz}
\ee
 We note that  \re{srz} is the Rankine-Hugoniot relation corresponding to $D_\mu z=0$ (see \re{z}), {\em i.e.} 
\be
z_t+(\phi(z)z)_x=0,
\eqn{z1}
\ee

We  also note here that the ordering of the eigenvalues \re{eival} depends  on whether $z$ is positive or negative,
\be
\mbox{ for }z\geq 0: \quad \lambda(z)=\lambda_1=\dots=\lambda_{n-1}\leq\lambda_n=\mu(z),
\ee
while
\be
\mbox{ for }z\leq 0: \quad \mu(z)=\lambda_n\leq\lambda_{n-1}=\dots=\lambda_{1}=\lambda(z).
 \ee
 Due to the entropy conditions, (see \ct{Lax}, \ct{KK}, \ct{LW}), for $z_+<z_-<0$,
  \be
 \mu(z_+)<s<\mu(z_-),\quad s<\lambda(z_+),\quad s<\lambda(z_-),
 \eqn{ent1}
 \ee
two such states  can be connected by a backward shock ($\overleftarrow{S}$) 
 while, for $0<z_+<z_-$, the states can be connected by a forward shock ($\overrightarrow{S}$), for which
  \be
 \mu(z_+)<s<\mu(z_-),\quad s>\lambda(z_+),\quad s>\lambda(z_-).
 \eqn{ent2}
 \ee
In each of these cases,  Lax shocks exist for $\Theta_+=\Theta_-$, with $\, z_+z_->0$.
 In the case $\, z_+z_-<0$   (for $z_+<z_-$), which includes the possibility of $\Theta_+=-\Theta_-$, such a connection is overcompressive, 
  
 \be
 \mu(z_+)\leq\lambda(z_+)\leq s\leq\lambda(z_-)\leq\mu(z_-).
 \eqn{com}
 \ee

We next set $\uu(x, t)=\uu(\xi), \xi=x/t$ to examine self-similar solutions. Equation \re{KK} for Riemann data \re{riemann} then reduces to the system
 \be
 (-\xi+\phi)\uu_\xi+\phi_zz_\xi\uu=\bf{0}.
 \eqn{ssu}
 \ee
On substituting $\uu=r\Theta$, this becomes
\be
((-\xi+\mu)r_\xi+\phi_zr^2\KK_\xi)\Theta+(-\xi+\lambda)r\Theta_\xi={\bf 0}.
\eqn{5.6}
\ee

 Since $\Theta\in S^{n-1}$,  $\Theta_\xi$ is orthogonal to $\Theta$. If $\Theta_\xi={\bf 0}$ it follows that $\KK_\xi(\vartheta)=0$, and so, by examining the first coefficient of \re{5.6}, either $r_\xi=0$ or $\xi=\mu$. That is, for constant $\Theta$ either $\uu$ is constant or $\xi=\mu$.
 If $\Theta_\xi\neq{\bf 0}$, both coefficients in \re{5.6} must be zero.  This implies that $\xi=\lambda$, and the first coefficient then becomes

 $$\phi_z(r\KK r_\xi+r^2\KK_\xi)=r\phi_zz_\xi=r\phi_\xi,$$
which must also be zero. Thus, for $\Theta_\xi\neq {\bf 0}$,  $\phi(z(\xi))$ remains constant for $r>0$  as does $z=r\KK(\vartheta)$, since $\phi_z>0$ from \re{phihyp}.
 
 Consequently, if $\xi=\lambda$, we have a contact discontinuity, $ J$, 
 \be
 z=r\KK(\vartheta)=r_-\KK(\vartheta_-)
 \eqn{xilambda}
 \ee
  while if $\xi=\mu$, we have  a rarefaction, $R$,
  \be
  \Theta=\Theta_- \quad (\vartheta=\vartheta_-),
  \eqn{ximu}
  \ee
 otherwise $\uu(\xi)$ is constant. 
 In particular, two states, with $z_-<z_+<0$, can be connected by a backward rarefaction ($\overleftarrow{R}$) 
 while, for $0<z_-<z_+$, the states can be connected by a forward rarefaction ($\overrightarrow{R}$).
 
 By using the entropy conditions \re{ent1}, \re{ent2}, two general states $\uu_-$, $\uu_+$ can be connected by classical (compound shock, contact and/or rarefaction) waves under the following conditions (\ct{YZ1}),
 
 $$J+\overrightarrow{S}\, (z_->z_+>0), \quad J+\overrightarrow{R}\, (0\leq z_-<z_+),$$
  
  $$\overleftarrow{S}+J\,(0>z_->z_+), \quad \overleftarrow{R}+J\, (z_-<z_+\leq0),$$
  
  $$\overleftarrow{R}+\overrightarrow{R}\,(z_-<0<z_+).$$
 The remaining case, for which $z_-\geq 0\geq z_+$,  does not have  states $z_-$, $z_+$ connected by classical waves that are admissible through the entropy conditions. 
 Using  $\pm$ superscripts to denote conventional limits from above and below and retaining $\pm$ subscripts to indicate left and right states, 
 we  begin by examining  the limit case, $z_-\geq z_+> 0$ as $z_+\rightarrow 0^+$, of   $J+\overrightarrow{S}$ solutions
 \noindent $(\mbox{as }\vartheta_+\mbox{ approaches }\vartheta_\sigma^+ \mbox{\, with \,} \KK(\vartheta_\sigma)=0)$, 
  
 $$z_-\stackrel{J}{\longrightarrow} z_*\stackrel{\overrightarrow{S}}{\longrightarrow} z_+, \quad z_+\rightarrow 0^+$$
 where $z_-=r_-\KK(\vartheta_-)=z_*=r_*\KK(\vartheta_+)$, $\quad z_+=r_+\KK(\vartheta_+)$, to find that
 $$r_*=\frac{r_-\KK(\vartheta_-)}{\KK(\vartheta_+)}\stackrel{\vartheta_+\rightarrow\vartheta_\sigma^+}{\longrightarrow} +\infty$$
 as the forward shock and contact discontinuity coincide, while  
 $$
 s=\frac{\phi(z_*)z_*-\phi(z_+)z_+}{z_*-z_+}\rightarrow\phi(z_-)^+$$
 as $z_+\rightarrow 0^+$.
 
  Next, we examine  $z(\xi)$ and $\uu(\xi)$. In terms of $\xi=x/t$ again, we have from equations  \re{z1} and  \re{ssu},
  
  \be
  -\xi z_\xi + (\phi (z)z)_\xi=0,
  \eqn{zxi}
  \ee
 and
  \be
  -\xi \uu_\xi + (\phi (z)\uu)_\xi={\bf 0}.
  \eqn{rxi}
  \ee
 Integrating between $J$ and $\overrightarrow{S}$ as $z_+\rightarrow 0^+$, \re{zxi} then implies that
 \begin{eqnarray}
 0&=&\lim_{z_+\rightarrow 0^+}\int_{\phi(z_-)}^{s}(-\xi z_\xi + (\phi (z)z)_\xi)d\xi\nonumber\\
&& =\lim_{z_+\rightarrow 0^+}( (-\xi z +\phi (z)z)\vert_{\phi (z_-)}^s)+\int_{\phi(z_-)}^{\phi(z_-)^+}zd\xi\nonumber \\
&& =\int_{\phi(z_-)}^{\phi(z_-)^+}z d\xi. \eqn{d1}
 \end{eqnarray}
Similarly, by \re{rxi},
  \begin{eqnarray}
 {\bf 0}&=&\lim_{z_+\rightarrow 0^+}\int_{\phi(z_-)}^{s}(-\xi \uu_\xi + (\phi (z)\uu)_\xi)d\xi\nonumber\\
&& =\lim_{z_+\rightarrow 0^+}( (-\xi \uu +\phi (z)\uu)\vert_{\phi (z_-)}^s)+\int_{\phi(z_-)}^{\phi(z_-)^+}\uu d\xi\nonumber \\
&& = - \phi(z_-) \uu_+ +\int_{\phi(z_-)}^{\phi(z_-)^+}\uu d\xi. \eqn{d2}
 \end{eqnarray}

   The analogous result  for  the limit case $z_-\rightarrow 0^-$ $(\vartheta_-\rightarrow\vartheta_\sigma^-)$ for   $\overleftarrow{S}+J$ solutions shows that
 $$z_-\stackrel{\overleftarrow{S}}{\longrightarrow} z_*\stackrel{J}{\longrightarrow} z_+, \quad z_-\rightarrow 0^-$$
 where $z_-=r_-\KK(\vartheta_-),\quad z_*=r_*\KK(\vartheta_-)=z_+=r_+\KK(\vartheta_+)$, and the intermediate state $r_*$  again becomes unbounded, while
 
 $$
 s=\frac{\phi(z_-)z_--\phi(z_*)z_*}{z_--z_*}\rightarrow\phi(z_+)^-$$
 as $z_-\rightarrow 0^-$.
 
It follows, integrating \re{zxi} between $\overleftarrow{S}$ and $J$, that

 \begin{eqnarray}
 0&=&\lim_{z_-\rightarrow 0^-}\int^{\phi(z_+)}_{s}(-\xi z_\xi + (\phi (z)z)_\xi)d\xi\nonumber\\
&& =\int_{\phi(z_+)^-}^{\phi(z_+)}z d\xi, \eqn{d3}
 \end{eqnarray}
and from \re{rxi}, that
  \begin{eqnarray}
 {\bf 0}&=&\lim_{z_-\rightarrow 0^-}\int^{\phi(z_+)}_{s}(-\xi \uu_\xi + (\phi (z)\uu)_\xi)d\xi\nonumber\\
&& = - \phi(z_+) \uu_-+ \int_{\phi(z_+)^-}^{\phi(z_+)}\uu d\xi. \eqn{d4}
 \end{eqnarray}
 
 Equations \re{d2}, \re{d4} imply that $\uu(\xi)$ contains weighted delta singularities  in the respective limits for $\xi=\phi(z_-)$ or $\xi=\phi(z_+)$ whereas $z(\xi)$ remains of bounded variation by equations \re{d1}, \re{d3}.
 
 For the case $z_->0>z_+$, 
 integrating \re{zxi} from $s^-$ to $s^+$ leads to the following,

 \be
 \int_{s^-}^{s^+}zd\xi =\alpha_-\KK_-+\alpha_+\KK_+=0,\quad \alpha_\pm\geq 0,
 \eqn{zKK}
 \ee
  where $\KK_\pm=K(\Theta_\pm)$, while  integrating 
 \re{rxi} gives
 
\be
\int_{s^-}^{s^+}\uu d\xi = \alpha_-\Theta_-+\alpha_+\Theta_+.
\eqn{uKK}
\ee 

 As a result, we have
 \begin{lemma}
  For $z_-\geq 0 \geq z_+$, any self-similar solution, $\uu(\xi)$, to \re{rxi} connecting constant states $\uu_-, \uu_+$ is associated with a corresponding self-similar solution, $z(\xi)$, to \re{zxi}.
  If $\uu(\xi)$ contains a $\delta$-shock then $z(\xi)$ has a shock, both moving at the speed $s$ given by \re{srz}, and satisfying \re{zKK}, \re{uKK}, where $\alpha_{\pm}$ are given by
   
 \be \alpha_\pm=\pm (s-\phi(z_\pm))r_\pm\geq 0.  
 \eqn{oc}
 \ee
 \end{lemma}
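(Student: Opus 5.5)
The plan is to split the argument into two parts: an algebraic link between \re{rxi} and \re{zxi}, followed by a weak-form balance across the singular front. For the first part, observe that $z=r\KK(\vartheta)=K(\uu)$ is homogeneous of degree one in $\uu$, because $K$ is defined on $S^{n-1}$ and extended by $z=rK(\Theta)$. Whenever $\uu(\xi)$ is piecewise smooth, dotting \re{rxi} with $\nabla_u z$ and using $\uu\cdot\nabla_u z=z$ (Euler's relation) turns \re{rxi} into \re{zxi}. On the classical pieces this is exactly the computation already done for \re{rz}. So every self-similar $\uu(\xi)$ carries a self-similar $z(\xi)$, and the classical pieces (constant states, $J$, $R$) correspond as in \re{xilambda}, \re{ximu}. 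Since $z_-\geq0\geq z_+$ admits no admissible classical connection, the only way to join the states is through a concentrated (singular) part. I would therefore write $\uu=\hat\uu+\beta\,\delta(\xi-s)$ with $\hat\uu$ equal to $\uu_\pm$ on either side of $s$. The jump in $z$ is read off from $\hat z$, so $z(\xi)$ has a shock at the same speed.

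The second step is to integrate \re{rxi} and \re{zxi} over a shrinking interval $(s^-,s^+)$, exactly as in \re{d1}--\re{d4}. Integrating by parts gives
\be
\int_{s^-}^{s^+}\uu\,d\xi=\big[(\xi-\phi(z))\uu\big]_{s^-}^{s^+}=(s-\phi(z_+))\uu_+-(s-\phi(z_-))\uu_-,
\ee
and similarly for $z$. Substituting $\uu_\pm=r_\pm\Theta_\pm$ and $z_\pm=r_\pm\KK_\pm$ produces \re{uKK} and \re{zKK}, with $\alpha_+=(s-\phi(z_+))r_+$ and $\alpha_-=-(s-\phi(z_-))r_-$, which is \re{oc}. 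The condition \re{zKK}, namely $\int z\,d\xi=0$, is what makes $z$ carry no delta while $\uu$ does; this is the analogue of \re{d1}, \re{d3}. The $z$-equation then reduces to the Rankine--Hugoniot relation \re{srz} for \re{z1}, which fixes $s$ and confirms that the two waves travel at the same speed.

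The third step is the sign condition $\alpha_\pm\geq0$, which is equivalent to $\phi(z_+)\leq s\leq\phi(z_-)$, i.e. $\lambda(z_+)\leq s\leq\lambda(z_-)$. I would obtain this from \re{srz}. With $g(z)=\phi(z)z$ convex by \re{phihyp} iv), $s$ is the slope of a chord of $g$ between $z_+\leq0\leq z_-$. By \re{phihyp} ii), $g(0)=0$, so $\phi(z)=g(z)/z$ is the slope of the chord from the origin. Convexity then orders the chord slopes as $g(z_+)/z_+\leq \frac{g(z_-)-g(z_+)}{z_--z_+}\leq g(z_-)/z_-$, which is exactly the overcompressive ordering \re{com}.

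The main obstacle I expect is making the first step rigorous in the singular setting. Once $\uu$ contains a delta, the product $\phi(z)\uu$ is not a priori defined (the $\delta^2$ issue). I would handle this as in \re{d2}, \re{d4}: interpret the solution as a limit of classical $J+\overrightarrow{S}$ or $\overleftarrow{S}+J$ solutions, in which $z$ stays of bounded variation and $\phi(z)$ stays bounded. The flux terms then have well-defined one-sided limits, and only $\int\uu\,d\xi$ and $\int z\,d\xi$ concentrate. The degenerate endpoint cases $z_\pm=0$ reduce to exactly those limits.
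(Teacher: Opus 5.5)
Your proposal is correct and is essentially the paper's argument. You integrate \re{zxi} and \re{rxi} across $(s^-,s^+)$, use \re{srz} (equivalently, the absence of a Dirac mass in $z$) to get $\int z\,d\xi=0$, and read off $\alpha_\pm$ from $(s-\phi(z_+))\uu_++(\phi(z_-)-s)\uu_-$. Your three-chord convexity argument then makes explicit the sign condition that the paper only cites from \re{com} and \re{phihyp}; in fact monotonicity alone suffices, since $s$ is the convex combination of $\phi(z_-)$ and $\phi(z_+)$ with weights $z_-/(z_--z_+)$ and $-z_+/(z_--z_+)$.

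One caution concerns your proposed rigorization. Limits of classical $J+\overrightarrow{S}$ or $\overleftarrow{S}+J$ solutions only reach the endpoint cases $z_+=0$ or $z_-=0$. For $z_->0>z_+$, the fact that $z$ carries no mass should instead be justified from \re{z1}: it is a strictly convex scalar conservation law, and its entropy solution for bounded data $z_->z_+$ is a bounded Lax shock with speed \re{srz}. Alternatively, you can simply take \re{srz} as given, as the paper does.
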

 \proof   The inequality \re{oc} is  a consequence of the overcompressibility condition \re{com} and the hypotheses made in \re{phihyp}. 
 The expression $z(\xi)$ satisfies \re{zKK} (and \re{z1}), or more fully,
 \be \int_{s^-}^{s^+}zd\xi 
 = -s(z_--z_+)+z_-\phi(z_-)-z_+\phi (z_+)=0,
 \eqn{z-+}
 \ee
 by \re{srz}. Similary, from \re{uKK}, $\uu(\xi)$ satisfies

    \begin{eqnarray}
 &\int_{s^-}^{s^+}\uu d\xi &= (s-\phi (z_+))\uu_++(\phi (z_-)-s)\uu_-\nonumber\\
&&=\frac{\phi(z_-)-\phi(z_+)}{z_--z_+}r_-r_+(\KK_-\Theta_+-\KK_+\Theta_-),
 \eqn{uu-+2}
 \end{eqnarray} 
 where we note that \re{uu-+2} can only reduce to zero if $\Theta_+=\pm\Theta_-$. Otherwise,  any  solutions, ${\bf u}$, containing delta-shocks at $\Theta_\sigma, (K(\Theta_\sigma)=0)$, are associated with bounded jumps in $z$ across $z=0$.

 \qed

 It will be seen in section \ref{sec:zdelta} that in certain cases \re{uu-+2} is automatically satisfied by the simplest form of  solution containing a delta-shock for which $\Theta_\sigma$ is well-defined for general left and right states, however it is not always the case that such solutions exist except on a locus defined by these states.

\subsection{The Case $\phi=\phi(\Theta)$}\label{sec:thetanice}

Recall that here $\Sigma=\Upsilon=\mathbb R^{n}$, by Lemma \ref{satz}, and the consequences of having $\mu=\lambda$ will turn out generally to allow only contact discontinuities and \lq cavitation\rq\, states to exist in the absence of singular solutions.
 
 As previously, we first try to connect left and right states by a shock or contact discontinuity using \re{RH},
 \be
 (s-\phi_+)r_+\Theta_++(\phi_--s)r_-\Theta_-={\bf 0},\quad \phi_\pm=\phi(\Theta_\pm).
 \eqn{rhtheta}
 \ee 
 
\noindent Thus  for $r_{\pm}>0$ either $s=\phi_+=\phi_-$ (a contact discontinuity) or $\Theta_+=\pm\Theta_-$. In the  case that $\Theta_+=\Theta_-$,  so $(s-\phi_-)r_++(\phi_--s)r_-=0$ or  $(s-\phi_-)(r_+-r_-)=0$, either $r_+=r_-$ and then  ${\bf u}_+={\bf u}_-$, or else  for $r_+\neq r_-$ there is again a contact. Similarly, for the  case  $\Theta_+=-\Theta_-$,  $(s-\phi(-\Theta_-))r_+-(\phi_--s)r_-=0$, or $s=\frac{\phi_-r_-+\phi(-\Theta_-)r_+}{r_-+r_+}$, which produces an {\em anomalous} shock (see \ct{KK}). We restrict our attention to  $\Theta_+\neq-\Theta_-$ in the following  analysis.

To solve the Riemann problem, we consider three cases for left and right states, ${\bf u}_\pm$, with $\phi_{-}=\phi_+$ or $\phi_{-}<\phi_+$ or $\phi_{-}>\phi_+$. The first two  can be solved using contact, constant, and \lq cavitation\rq\, solutions, in a similar way to the two-dimensional case, (\ct{YZ2}), but with solutions  generally not  unique in higher dimensions, even under various forms of \lq monotonicity\rq\, assumptions. Defining

\be U_{\phi_0}=\{ {\bf u}\in\mathbb R^{n}: {\bf u}=r\Theta, \phi(\Theta)=\phi_0\},
\ee
$\bf u_+$ and $\bf u_-$  can be connected by a contact if and only if  ${\bf u}_\pm\in U_{\phi_0}$ for some $\phi_0$.

For $\phi_-<\phi_+$ let $\uu(x, t)=\uu(\xi), \xi=x/t$ and consider self-similar solutions in the wedge $\{(x, t): \phi_- t< x< \phi_+ t\}$. Equation \re{KK} for Riemann data \re{riemann}  reduces to the boundary value problem 
 \be
 (-\xi+\phi)(r\Theta)_\xi+\phi_\xi r\Theta=\bf{0}, \quad \uu(\pm\infty)=\uu_\pm,
 \ee
or,
  \be
 (\phi_\xi r + (\phi-\xi)r_\xi)\Theta+(\phi-\xi) r\Theta_\xi=\bf{0}.\eqn{th}
 \ee
 In the case that $\Theta_\xi={\bf 0}$, and so  $\phi_\xi=0$,   either $\phi=\xi$, a contradiction, or else $r_\xi=0$, in which case 
 $\uu_+=\uu_- $. 
 However, if $\Theta_\xi\neq{\bf 0}$, then since both coefficients in \re{th} must be zero, either $\phi=\xi$ and $\phi_\xi=0$, again a contradiction, or else $r=0$. Thus for $\uu_+\neq\uu_-$, provided that characteristics are non-overlapping ($\phi_-t<\phi_+t$),  connections between left and right states $\uu_\pm$ are via  \lq cavitation\rq\, ($\uu={\bf 0}$) in the region $\phi_-<\xi<\phi_+$, {\em i.e.} ${\bf u}(x, t)=r_-\Theta_-(1-H(x-\phi_-t))+ r_+\Theta_+H(x-\phi_+t)$.

As an example, if we consider the  case of $\phi(\Theta)$ having dependence on $q={\bf a}\cdot\Theta, |{\bf a}|=1$, with $\phi_q(q)>0$,
then for contacts, $\phi_-=\phi_+$,

\be {\bf u}_\pm\in U_{\phi_0} \iff \Theta_\pm\in S^{n-1}\cap U_{q_0}, \ee

\noindent where $U_{q_0}=\{ {\bf u}\in\mathbb R^{n}: {\bf u}=r\Theta, {\bf a}\cdot\Theta=q_0\}$ and $\phi(q_0)=\phi_0$. Such a case can be found in the context of geometrical optics for $n=2, {\bf{a}}=(1, 0)$ and $\phi(q)=q$ in (\ct{ER1}),
where for $\phi_-<\phi_+$,  cavitation solutions connect left and right states between corresponding sets $U_{q_-}$ and $U_{q_+}$ with $-1<q_-<q_+<1$, (see \ct{YZ2}). 

The absence of shocks is a  structural feature of the  $\phi(\Theta)$ system. Because the flux depends only on the angular variable, the system is invariant under radial rescaling of the state and the reduction to contact waves is not simply a consequence of repeated eigenvalues or linear degeneracy. It stems from the fact that the  variable $\Theta$  takes values on the nonlinear manifold $S^{n-1}$, where the evolution equation is a transport equation rather than a conservation law. Scale invariance renders the radial amplitude dynamically passive, forces complete linear degeneracy of the characteristic fields, and removes the constitutive mechanism responsible for genuinely nonlinear wave formation. As a result, there is no classical Rankine–Hugoniot mechanism for generating new admissible jump speeds. The hypothesis fixes the propagation speed pointwise, and discontinuities  only persist when both sides  possess the same characteristic speed. In this sense, the geometry of the state manifold itself replaces the usual entropy-shock mechanism, making the $\phi(\Theta)$ case a singular limit of the Keyfitz–Kranzer family.
  
 Finally, for $\phi_->\phi_+$,  characteristic fields coming from left and right states overlap and the classical Rankine-Hugoniot equations, \re{RH}, fail to have solutions.  Recalling Section  \ref{theta section}, where it was found that $r$ could become unbounded in finite time, we will consider this inequality in the context of unbounded solutions in Section \ref{deltatheta}.

\section{Delta-Shocks}

 In this section we consider weak solutions, ${\bf u}(x, t):\mathbb R\times\mathbb R_+\rightarrow{\cal B}\subset \mathbb R^n$, to the initial value problem for \re{KK}, 
 with possible singularities in $\uu$ resulting in $\cal B$  being  unbounded in general, and delta-shock solutions, $\delta_{\sigma}$,  propagating along  ${\SD}=\{(x, t)\in\mathbb R\times\mathbb R_+: x={x}(t)\}$.

 An integral balance law formulation for  system \re{KK}  is useful    both for bounded and for unbounded forms of generalized solutions. It will be assumed that any weak solution to \re{KK} with  possible discontinuities or concentrations in ${\bf u}(x, t)$ on a curve $\SD=\{(x, t)\in\mathbb R\times\mathbb R_+: x={x}(t), x(0)=0\}$, satisfies the relation
 \footnote{  Reynold's transport theorem can be directly verified  to hold for time-dependent  Radon measures, $\vv=\hat{\vv}(x, t)dx+{\bf w}(t)\delta(x-x(t)), \hat{\vv}\in L_{loc}^1, {\bf w}\in W^{1, \infty}$ -
 provided $ a(t)<x(t)<b(t)$,   
$\frac{d}{dt}\int_{a(t)}^{b(t)}\vv(x, t)dx=
\int_{a(t)}^{b(t)}\vv_t(x, t)dx+\vv(b(t),t)\frac{db}{dt}-\vv(a(t),t)\frac{da}{dt}$. }
\be
\frac{d}{dt}\int_{a(t)}^{b(t)}\uu(x, t)dx=
[\phi(\uu(\cdot,t))\uu(\cdot,t)]_{b(t)}^{a(t)}+\uu(b(t),t)\frac{db}{dt}-\uu(a(t),t)\frac{da}{dt}
\eqn{cal}
\ee
for every $-\infty<a(t)<x(t)<b(t)<\infty$.
In particular, letting $a(t)\rightarrow x^-(t)$ and $b(t)\rightarrow x^+(t)$,   solutions  across $\SD$   formally satisfy 

\be
\frac{d}{dt}\int_{x^-(t)}^{x^+(t)}\uu(x, t)dx =[\phi(\uu(\cdot,t))\uu(\cdot,t)]_{x^+(t)}^{x^-(t)} - [\uu(\cdot, t]_{x^+(t)}^{x^-(t)}\frac{dx(t)}{dt}.
\eqn{bKK}
\ee
Weak solutions, ${\bf u}(x, t):\mathbb R\times\mathbb R_+\rightarrow{\cal B}\subset \mathbb R^n$, to the Riemann problem for \re{KK}
  with $\cal B$  bounded, and admitting discontinuities propagating along   $\SD$  in which the left side of \re{bKK} vanishes, therefore allow   jumps   travelling at  the speed $\frac{dx}{dt}=s$,
 satisfying 
\re{RH}. 
 In general, we will write \re{bKK} as
 \be
 \frac{d}{dt}\int_{x^-(t)}^{x^+(t)}\uu(x, t)dx= (s-\phi (\uu_+))\uu_++(\phi (\uu_-)-s)\uu_- =\alpha_+\Theta_++\alpha_-\Theta_-,\nonumber\\
 \eqn{sbKK}
\ee
 where
\be \alpha_\pm=\pm (s-\phi(\uu_\pm)r_\pm,\quad s=\frac{dx}{dt}.
 \eqn{occ}
 \ee

In relation both to bounded and unbounded $\cal B$, we first make a few observations obtained  by connecting  points $(x, \tau)=(x(\tau), \tau), \tau>0$,  on $S_\sigma$ to initial data. Assuming $m_->\frac{dx(t)}{dt}>m_+$, and taking  $x_L(t)= x(\tau)-m_-(\tau-t)$ and $x_R(t)=x(\tau)-m_+(\tau-t)$, $0\leq t\leq\tau$,
\re{cal} gives,
\be
\frac{d}{dt}\int_{x_L(t)}^{x_R(t)}\uu(x, t)dx=(\phi_--m_-)\uu_--(\phi_+-m_+)\uu_+, \quad0\leq t\leq\tau,
\eqn{tra}
\ee
where $\uu_-=\uu(x_L (t), t)$ and $\uu_+=\uu(x_R(t), t)$,   $\phi_\pm$ correspondingly.   Then, given classical Riemann data  where states  to the left and right of $x=x(t)$ remain constant together with $\frac{dx(t)}{dt}\equiv s$, 
 we find
\begin{eqnarray}
I(t)_{m_{\pm}}\equiv&\int_{x_L(t)}^{x_R(t)}\uu(x, t)dx\nonumber\\
=&((\phi_--m_-)\uu_-+(m_+-\phi_+)\uu_+)t+\int_{x_L(0)}^{x_R(0)}\uu(x, 0)dx\nonumber\\
=&((m_--\phi_-)\uu_-+(\phi_+-m_+)\uu_+)(\tau-t)+((\phi_--s)\uu_-+(s-\phi_+)\uu_+)\tau.\quad
\eqn{cone}
\end{eqnarray}
That is, given arbitrary $\tau>0$, $I(t)_{m_{\pm}}$ satisfies
 $$I(0)_{m_{\pm}}=((m_--s)\uu_-+(s-m_+)\uu_+)\tau,$$ and tends to the  value $$I(\tau)_{m_{\pm}}=((\phi_--s)\uu_-+(s-\phi_+)\uu_+)\tau=(\alpha_-\Theta_-+\alpha_+\Theta_+)\tau\equiv I(\tau)$$ as $t$ approaches  $\tau$, where the right side is zero only if $\uu$ satisfies the Rankine-Hugoniot equations \re{RH}.
In particular, this construction becomes possible for $m_\pm=\phi_\pm$ provided the geometric entropy condition, $\lambda_-> s> \lambda_+$, holds, which means $\alpha_\pm>0$ and, by  \re{cone},   that  $I(t)_{\phi\pm}$ is conserved.
 Further, if we choose $m_\pm$ either  close to $s\pm$, or identical to $\phi_\pm$,  each  integration tends to the same final value, $I(\tau)$. Thus, since $I(t)_{\phi\pm}$ remains constant, we obtain from $x\in\Omega(t)\equiv (s\tau-\phi_-(\tau-t), s\tau-\phi_+(\tau-t))$ that for arbitrary $0\leq t\leq\tau$,
\be
I(t)_{\phi_\pm}=(\alpha_-\Theta_-+\alpha_+\Theta_+)\tau,
\eqn{conserve}
\ee
while we can   use \re{cone} directly to find the contribution from $x\in\Omega_\epsilon (t)\equiv(st-\epsilon(\phi_--s)(\tau-t), st-\epsilon(\phi_+-s)(\tau-t))$ to be
\be
I(t)_{s+\epsilon(\phi_\pm-s)}=(\alpha_-\Theta_-+\alpha_+\Theta_+)(t+\epsilon(\tau-t)).
\eqn{conelim}
\ee
So, setting 
\be
M_\epsilon(t)=\frac{|I(t)_{s+\epsilon(\phi_\pm-s)}|}{|I(t)_{\phi_\pm}|},
\ee
 $M_\epsilon(t)$ takes $M_\epsilon(0)=\epsilon$ to $M_\epsilon(\tau)=1$ as $t\rightarrow\tau$, which means that the inner region,  with measure 
 \begin{eqnarray}
 A_{\epsilon\phi_\pm} (t)\equiv \epsilon(\phi_--\phi_+)(\tau-t)
 \equiv \epsilon A_{\phi\pm}(t),
 \eqn{hel}
 \end{eqnarray}
  collapses to the point $x=s\tau$ as $t\rightarrow \tau$ while increasingly   contributing to $I(t)_{\phi\pm}$ as $t$ grows, in contrast to the fixed contribution to $I(t)_{\phi\pm}$ from $A_{\phi\pm}(t)$.
 Equivalently, the outer region with measure $(1-\epsilon)A_{\phi\pm}(t)$
 contributes
\be
J_\epsilon(t)=I(t)_{\phi_\pm}-I(t)_{s+\epsilon(\phi_\pm-s)}
\eqn{JJ}
\ee
whose magnitude decreases, with $J_\epsilon(0)=(1-\epsilon)I(\tau)$ and $J_\epsilon(t)\rightarrow {\bf 0}$ as $t\rightarrow\tau$ while 
the magnitude of $I(t)_{s+\epsilon(\phi_\pm-s)}$ increases from $\epsilon I(\tau)$  to $I(\tau)$, and
\be
J_\epsilon(t)+I(t)_{s+\epsilon(\phi_\pm-s)}=I(\tau), \quad 0\leq t<\tau,
\eqn{J+}
\ee
for arbitrary $\epsilon>0.$ 

Next, set $\varrho=|\alpha_-\Theta_-+\alpha_+\Theta_+|$,\footnote{Note  here, that with Riemann data, the vector quantities in \re{conserve}, \re{conelim} and \re{JJ}, \re{J+} are each aligned with $\alpha_-\Theta_-+\alpha_+\Theta_+$, and that if $\phi=\phi(z)$, no concentration can occur in z.} 
and denote $x_{R/L}(t)=s\tau-\phi_{\pm}(\tau-t)$ and $x_{R\epsilon/L\epsilon}(t)=st-\epsilon(\phi_\pm-s)(\tau-t)$.
Since $|I(t)_{\phi\pm}|=\varrho\tau$ is constant, we can  define a Radon measure 
$i_{\phi\pm}(t)(dx)$
 so that
$$<i_{\phi\pm}(t), \psi>=\varrho\tau A_{\phi\pm}(t)^{-1}\int_{x_L(t)}^{x_R(t)}\psi dx(t),$$ 
for all $\psi\in C_0(-\infty, \infty)$ and, since $A_{\phi\pm}(t)\rightarrow 0$ and 
$\sup_{\Omega(t)}|\psi (x, t) - \psi (st, t)|\rightarrow 0$ as $t\rightarrow \tau$, therefore $i_{\phi\pm}(t)$ converges weak$-*$ to a Dirac measure,
$$i_{\phi\pm}(t)\rightharpoonup\varrho\tau\delta(x-s\tau).$$ 
Similarly, since \re{conelim} implies that
 $$\frac{|I(t)_{s+\epsilon(\phi_\pm-s)}|}{t+\epsilon(\tau-t))}$$
remains constant we can, in turn, 
 construct the measure
$i_{s+\epsilon(\phi_\pm-s)}(t)(dx)$
where
$$<i_{s+\epsilon(\phi_\pm-s}(t), \psi>=(\varrho(t+\epsilon(\tau-t)) A_{\epsilon\phi\pm}(t)^{-1}\int_{x_{L\epsilon}(t)}^{x_{R\epsilon}(t)}\psi dx(t)$$ for all $\psi\in C_0(-\infty, \infty)$, {\em ie.} 
$$i_{s+\epsilon(\phi_\pm-s)}(t)\rightharpoonup\varrho\tau\delta(x-s\tau),$$ as $t\rightarrow\tau$, while for $0\leq t<\tau$,
$$
i_{s+\epsilon(\phi_\pm-s)}(t)\rightharpoonup\varrho t\delta(x-st)
$$
as $\epsilon\rightarrow 0.$

Combining the results above, we have a two-scale decomposition representation for $\uu$ as a vector-valued measure over the region $\Omega(t)$, obtained by splitting the inner and outer parts, $i_{s+\epsilon(\phi_\pm-s)}(t)(dx)$ and $(i_{\phi\pm}-i_{s+\epsilon(\phi_\pm-s)})(t)(dx)$,  into pairwise orthogonal components 
 in which the total mass is conserved, while transferring from the outer to the inner region with both contracting as t increases, {\em ie.}
 \be
 {\bf u}(t)(dx)=(\alpha_-\Theta_-+\alpha_+\Theta_+)((i_{\phi\pm}-i_{s+\epsilon(\phi_\pm-s)})+i_{s+\epsilon(\phi_\pm-s)})(t)(dx).
 \ee
 In particular, as $t\rightarrow\tau$,
 \be
 \uu(t)(dx)\rightharpoonup(\alpha_-\Theta_-+\alpha_+\Theta_+)\tau\delta(x-s\tau),
 \ee
 while as $\epsilon\rightarrow 0,$
 \be
 \uu(t)(dx)\rightharpoonup (\alpha_-\Theta_-+\alpha_+\Theta_+)t\delta(x-st),\, t<\tau.
 \eqn{star}
 \ee

In a slightly more general setting, single $\delta-$shock solutions to \re{bKK} can be specified as

\be
\uu(x, t)=\hat{\uu}(x, t)+{\bf w}(t)\delta_{\sigma} 
\eqn{delta}
\ee
where $\hat{\bf u}(., t)\in L^\infty$, the \lq singular mass',  ${\bf w}(t)$, lies in $W^{1, \infty}[0, T), T>0,$ and $\delta_{\sigma} =\delta (x-x(t))$ denotes a Dirac measure centered on a corresponding curve ${\SD}$.
Provided  that ${\bf u}$  has one-sided limits ${\bf u}_{\pm}  \mbox{ }(=\hat{\uu}_\pm)$ at ${\SD}$, substituting \re{delta} into \re{occ} then gives  the generalized Rankine-Hugoniot relation,

\be
\frac{d{\bf w}}{dt}=[\phi({{\bf u}}){{\bf u}}]-[{\bf u}]\frac{dx}{dt}
\eqn{dshock}
\ee
which, for constant left and right states and speed $\frac{dx}{dt}=s$, reads 
 \be
\dot{\bf w}(t)=\alpha_+\Theta_++\alpha_-\Theta_-,
\eqn{dRshock}
\ee
where the $\alpha_\pm$  are nonnegative for overcompressive data, and we now assume that generally $\Theta_+\neq\pm\Theta_-$ from the remark following \re{uu-+2}. The right side of  equation \re{dRshock}  represents a Rankine-Hugoniot  deficit.

For  convenience, we   refine the solution representation  \re{delta} by formally expressing it in  \lq polar\rq\,  form as
\be
\uu (x, t)=(\hat{r}(x, t)+w(t)\delta_{\sigma}) ((1-\breve\delta_{\sigma})\hat{\Theta}(x, t))+\breve\delta_{\sigma}{\Theta}_{\sigma} (t)),
\eqn{new}
\ee
where $\Theta_{\sigma}(t)\in S^{n-1}$   and $\breve{\delta}_{\sigma}=\breve\delta(x-x(t))$, with $\breve\delta(.)\in L^\infty$  given by
$$
\breve\delta (x)=0, \mbox{ if} \, x\neq 0, \quad \breve\delta (0)=1.
$$
The functions $\hat{r}(x, t)$ and $\hat{\Theta}(x, t)$ are  defined
by
\be
\hat{r}(x, t)=r_--[r]H(x-x(t)), \quad
\hat{\Theta}(x, t)=
\Theta_--[\Theta]\,H(x-x(t)),
\eqn{Thetahat}
\ee
 corresponding to   initial data  $\uu_0$ with piecewise constant states ${r}_\pm\Theta_\pm$ for $x$ less than, or greater than, $x(0)=0$. 
 With ${\bf u}$ having  left and right  states  at ${\SD}$ defined by \re{Thetahat}, integrating \re{new}, or equivalently \re{delta}, across ${\SD}$ implies that

\be
{\bf w}(t)=\int_{x(t)^-}^{x(t)^+}{\bf u}\, dx=w(t)\,\Theta_{\sigma}(t)
\eqn{intsigma}
\ee
under the assumption that $\int_{x(t)^-}^{x(t)^+} \delta_{\sigma}\,\breve\delta_{\sigma} dx=1$,
\footnote{This is consistent with the  Dirac measure $\delta_{\cal S}$ being treated as a measure acting on pointwise-defined functions and interpreting the Radon measure $\nu = \delta_{\cal S}\breve\delta_{\cal S}$ as being defined by integration against $\delta_{\cal S}$.}
and, for initial data having distributional component ${\bf w}(0)=w(0)\Theta_{\sigma} (0)$ at $x(0)=0$, integrating \re{dRshock} and using \re{intsigma} gives
\be
w(t)\Theta_{\sigma}(t)=w(0)\Theta_{\sigma}(0)+(\alpha_+\Theta_++\alpha_-\Theta_-)t,
\eqn{w}
\ee
which, in the  case of standard Riemann  data with $w(0)=0$, reduces \re{new} to
\be
\uu (x, t)=(\hat{r}(x, t)+\Rho\,t\,\delta_{\sigma})((1-\breve\delta_{\sigma})\hat{\Theta}(x, t))+\breve\delta_{\sigma}{\Theta}_{\sigma}))
\eqn{newer}
\ee
where $\Rho\equiv |\alpha_+\Theta_++\alpha_-\Theta_-|, w(t)=\Rho t$ while, for $x(0)=w(0)=0$, equation \re{w} implies that $\Theta_{\sigma}(t)$ satisfies
\be
\Rho\Theta_{\sigma}=\alpha_+\Theta_++\alpha_-\Theta_-,\quad \alpha_\pm>0,
\eqn{now}
\ee
(see \re{star})
and so $\Theta_{\sigma}\in S^{n-1}\cap sp\{\Theta_-, \Theta_+\}$ is constant and lies between $\Theta_+$ and $\Theta_-$ on the great circle which they generate. If $\Rho=0, \, \mbox{ then either\,}\, \alpha_\pm=0$ and  we recover {\em contacts},  $s=\phi_+=\phi_-  $ (for $r_\pm>0$), or else $\Theta_+, \Theta_-\in S^{n-1}$ are linearly dependent, and   we obtain {\em Lax} shocks for $\Theta_+=\Theta_-$ (with $\alpha_+\alpha_-<0$, $s=\frac{\phi_+r_+-\phi_-r_-}{r_+-r_-}$), or {\em anomalous} shocks for $\Theta_+=-\Theta_-$ (with $\alpha_+\alpha_- >0$,  $s=\frac{\phi_+r_++\phi_-r_-}{r_++r_-}$).

\subsection{The Case  $\phi=\phi(z)$}\label{sec:zdelta}
Here we refine the form of distribution solution \re{delta} somewhat further. As a result of equation \re{z1}, or repeating the earlier analysis in this chapter using $z$ in place of $\uu$, $z$ should not possess a Dirac mass on ${\SD}$ (we will  verify this subsequently when we adopt  \re{newer} as a particular type of solution) and so, since $z(x, t)=|\hat{\uu}(x, t)+{\bf w}(t)\delta_{\sigma} |\,K(\Theta (x, t))$ for solutions of the form \re{delta}, we must have $K(\Theta (x(t), t))=0,$
where the curves $\Theta_{\sigma}(t)\equiv\Theta (x(t), t)$ are assumed to lie in the intersection of $ S^{n-1}$ and  $n-2$ dimensional hypersurface(s), $\cal H$, across which, recalling the nondegeneracy hypothesis \re{nond} (and accordingly $\nabla_\vartheta\KK (\vartheta_{\sigma} (t))\neq {\bf 0}$), $\KK(\vartheta)$ changes sign, putting us in the overcompressible case (see \re{com}). 
\footnote{If $n=2$, each surface $\cal H$ reduces to a curve intersecting the unit circle   transversely at points, under the nondegeneracy condition $\KK'(\vartheta_{\cal S})\neq 0$,  while for $n=3$ the intersection reduces to a curve(s) on $S^2$.}

Since $z$ has no dirac mass on ${\cal S}$,   \re{z1} implies that   discontinuities    move at   speed $s$, 
\be
s=\frac{[\phi({z}){z}]}{[{z}]}, \quad {z}={r}K({\Theta}),
\eqn{s}
\ee
according to which (see \re{zKK}) $s$ satisfies  the relation 
\be
0=\alpha_+K(\Theta_+)+\alpha_-K(\Theta_-),
\eqn{zs}
\ee

\noindent from which we recover  $K(\Theta_-)\geq 0\geq K(\Theta_+)$, with $K(\Theta_{\sigma})=0$, stemming from the inequality $\lambda_-\geq s\geq \lambda_+$.

\begin{lemma}
Let $K(\Theta_-)>0>K(\Theta_+)$. Then $\Theta_\sigma$ is given by
\be
\Theta_\sigma=\frac{K_-\Theta_+-K_+\Theta_-}{|K_-\Theta_+-K_+\Theta_-|},
\eqn{key}
\ee
provided $\Theta_+$, $\Theta_-$ satisfy the relationship
\be
K(\frac{K_-\Theta_+-K_+\Theta_-}{|K_-\Theta_+-K_+\Theta_-|})=0.
\eqn{zero}
\ee
\end{lemma}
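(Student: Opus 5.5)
We combine the direction equation \re{now} with the $z$-balance \re{zs}. By \re{now}, with Riemann data and $w(0)=0$, the singular part satisfies $\Rho\Theta_\sigma=\alpha_+\Theta_++\alpha_-\Theta_-$, with $\alpha_\pm\geq 0$ given by \re{occ}. Since $z$ carries no Dirac mass on $\SD$ by \re{z1}, the speed $s$ is fixed by \re{s}, and this forces \re{zs}: $\alpha_+K_++\alpha_-K_-=0$, writing $K_\pm=K(\Theta_\pm)$. The aim is to use \re{zs} to determine the ratio $\alpha_+:\alpha_-$, substitute into \re{now}, and then impose the remaining constraint $K(\Theta_\sigma)=0$ derived at the start of Section \ref{sec:zdelta}.

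Step one is the ratio. Because $K_->0>K_+$, relation \re{zs} gives $\alpha_+=-\alpha_-K_-/K_+$. Equivalently, $(\alpha_-,\alpha_+)=c(-K_+,K_-)$ with $c\geq0$. A quick direct check is available from \re{occ} and \re{s}: $\alpha_+=(s-\phi(z_+))r_+$ and $\alpha_-=(\phi(z_-)-s)r_-$. Using $s(z_--z_+)=\phi(z_-)z_--\phi(z_+)z_+$ gives $(s-\phi_+)=z_-(\phi_--\phi_+)/(z_--z_+)$ and $(\phi_--s)=z_+(\phi_--\phi_+)/(z_--z_+)$. Hence $\alpha_+=c\,K_-$ and $\alpha_-=-c\,K_+$ with $c=r_-r_+(\phi_--\phi_+)/(z_--z_+)$, which is positive by \re{phihyp}iii) since $z_->0>z_+$. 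This also reproduces the right side of \re{uu-+2}.

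Step two is the direction. Substituting the ratio into \re{now} gives $\Rho\Theta_\sigma=c(K_-\Theta_+-K_+\Theta_-)$. We need the right side nonzero. This holds because $\Theta_+\neq\pm\Theta_-$, which the paper assumes following \re{uu-+2} and \re{dRshock}. Since $\Theta_\sigma\in S^{n-1}$ and $c>0$, normalising yields \re{key}, with $\Rho=c|K_-\Theta_+-K_+\Theta_-|$. The positive coefficients $K_-$ and $-K_+$ also confirm that $\Theta_\sigma$ lies on the great-circle arc between $\Theta_\pm$, as noted after \re{now}.

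Step three is the compatibility condition. The requirement that $z=rK(\Theta)$ have no Dirac mass on $\SD$ forces $K(\Theta_\sigma)=0$. Inserting \re{key} gives exactly \re{zero}. So \re{zero} is a constraint linking $\Theta_+$ and $\Theta_-$ and cannot be dropped: the direction in \re{key} is fixed purely by the Rankine–Hugoniot deficit, and in general it need not be a null direction of $K$.

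The main obstacle is justifying rigorously that the absence of a $\delta$ in $z$ forces $K(\Theta_\sigma)=0$, given that $z$ is a nonlinear function of the measure-valued $\uu$. I would argue this through the formal polar representation \re{newer}, using the convention $\int\delta_\sigma\breve\delta_\sigma dx=1$. Under that convention, the singular part of $z$ is $\Rho t\,K(\Theta_\sigma)\delta_\sigma$, and this must vanish by \re{z1}. The argument should be presented as a consequence of the adopted solution ansatz rather than derived from first principles.
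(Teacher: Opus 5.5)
Your proposal is correct and takes the paper's route: you use the speed formula \re{srz} to write $(s-\phi_+)r_+\Theta_++(\phi_--s)r_-\Theta_-$ as $c\,(K_-\Theta_+-K_+\Theta_-)$ with $c=r_-r_+(\phi_--\phi_+)/(z_--z_+)>0$ (this is \re{uu-+2}), substitute into \re{now}, normalise to get \re{key}, and impose $K(\Theta_\sigma)=0$ to obtain \re{zero}. There is one small sign slip: the intermediate identity should read $\phi_--s=-z_+(\phi_--\phi_+)/(z_--z_+)$, which is positive since $z_+<0$, and this is what your correct conclusion $\alpha_-=-cK_+$ requires.
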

\proof
Recalling \re{srz} and as in \re{uu-+2}, we find
\be
\Rho\Theta_{\sigma}= \frac{(\phi_+-\phi_-)r_+r_-}{r_+K_+-r_-K_-}(K_-\Theta_+-K_+\Theta_-),
\eqn{newr}
\ee
 using \re{now},
{\em i.e.} for $K(\Theta_-)>0>K(\Theta_+)$,  we have
\be
\Rho=\frac{[\phi]}{[z]}r_+r_-|K_-\Theta_+-K_+\Theta_-|\quad \mbox{and}\quad\Theta_{\sigma}=\frac{K_-\Theta_+-K_+\Theta_-}{|K_-\Theta_+-K_+\Theta_-|}.
\eqn{nophi}
\ee
Finally, requiring $K(\Theta_{\sigma})$ to be zero, 
gives \re{zero}.
\qed
\begin{remark}
A value can  be formally assigned to $z_\sigma$ via the use of \re{KKb}. Since $\phi(z_-)>s>\phi(z_+)$ and $\phi'(z)>0$, in order for  $\Theta_\sigma$ to propagate
with speed $s\equiv\phi(z_\sigma)$, implies  $z_\sigma=\phi^{-1}(\frac{[\phi(z)z]}{[z]})$, where $z_->z_\sigma>z_+$.
\end{remark}

In summary, we have a representation formula.

\begin{theorem}
Let $\uu (x, t)$ be any solution to  \re{KK} with bounded Riemann data,   \re{Thetahat}, (\re{riemann}),  
of the form \re{newer} for $\phi =\phi(z)$, i.e.
\be  \uu (x, t)=(\hat{r}(x, t)+\Rho t\delta_{\sigma}) ((1-\breve\delta_{\sigma})\hat{\Theta}(x, t))+\breve\delta_{\sigma}{\Theta}_{\sigma})),
\nonumber
\ee
for which
\be
K(\frac{K_-\Theta_+-K_+\Theta_-}{|K_-\Theta_+-K_+\Theta_-|})=0.
\eqn{harder}
\ee
 Then $\bf{u}(x, t)$ satisfies the balance law \re{bKK} for $x=st$, 
\be
\frac{d}{dt}\int_{st^-}^{st^+}\uu(x, t)dx - s[\uu(\cdot, t]_{st^-}^{st^+}+[\phi(z(\cdot,t))\uu(\cdot,t)]_{st^-}^{st^+}={\bf 0},
\eqn{again}
\ee
where
\be
s=\frac{[\phi(z)z]}{[z]}.
\ee
\end{theorem}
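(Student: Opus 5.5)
The plan is to verify \re{again} directly by computing each of its three terms for the ansatz \re{newer} and then using the identities already obtained in the preceding lemma. I will work with the sign convention of \re{sbKK}, in which the jump terms produce $(s-\phi(z_+))\uu_+ + (\phi(z_-)-s)\uu_- = \alpha_+\Theta_+ + \alpha_-\Theta_-$, with $\alpha_\pm$ as in \re{occ}.

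\emph{Singular mass.} Integrating \re{newer} across $x=st$ uses the normalisation $\int\delta_\sigma\breve\delta_\sigma\,dx=1$ and the fact that $\hat r$, $\hat\Theta$ are bounded. This gives \re{intsigma} in the form
\[
\int_{st^-}^{st^+}\uu\,dx=\Rho\, t\,\Theta_\sigma .
\]
Here $\Theta_\sigma$ is the constant vector of \re{key}. Hence the time derivative is $\Rho\Theta_\sigma$.

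\emph{Flux terms.} The one-sided limits of $\uu$ at $x=st$ are $r_\pm\Theta_\pm$, and $z_\pm=r_\pm K_\pm$. The Dirac part does not enter the traces. The bracket terms in \re{again} therefore reduce to
\[
-\big((s-\phi(z_+))r_+\Theta_+ + (\phi(z_-)-s)r_-\Theta_-\big).
\]
With $s=[\phi(z)z]/[z]$ as in \re{srz}, the computation already done in \re{uu-+2} rewrites this as
\[
-\frac{[\phi]}{[z]}\,r_+r_-\,(K_-\Theta_+-K_+\Theta_-).
\]
I will be careful with signs here, since the paper's $[\cdot]$ convention is $\uu_--\uu_+$.

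\emph{Cancellation.} By \re{newr} and \re{nophi}, $\Rho$ and $\Theta_\sigma$ were defined exactly so that $\Rho\Theta_\sigma$ equals this same vector. The three terms therefore sum to zero, which is \re{again}. The role of hypothesis \re{harder} is consistency, not the algebra. It guarantees $K(\Theta_\sigma)=0$, so that $z$ carries no Dirac mass on $\SD$. Without that, the speed $s$ would not be determined by \re{z1} through \re{srz}. I will note that \re{harder} makes the ansatz coherent: the concentrated part lives in a null direction of $K$.

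\emph{Main obstacle.} The hard step is justifying that the nonlinear flux $\phi(z)\uu$ is well defined on $\SD$, which is the $\delta^2$ issue. I would handle it by arguing that $z=(\hat r+\Rho t\delta_\sigma)K(\cdot)$ is evaluated with the $\breve\delta_\sigma$ factor selecting $\Theta_\sigma$ at $x=st$. Since $K(\Theta_\sigma)=0$ by \re{harder}, the product $\delta_\sigma K(\Theta_\sigma)$ vanishes pointwise on the support. Consequently $z$ is bounded and equals $\hat r K(\hat\Theta)$ off $\SD$, so $\phi(z)\uu$ is a bounded function times the measure. Only its traces $\phi(z_\pm)\uu_\pm$ enter the balance law \re{bKK}, in the limit $a\to x^-$, $b\to x^+$ of \re{cal}.
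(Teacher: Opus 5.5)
Your argument is essentially the paper's: the theorem is stated as a summary of the preceding Lemma, where \re{intsigma}--\re{now} give the mass derivative $\Rho\Theta_\sigma$, \re{uu-+2} and \re{newr}--\re{nophi} identify it with the Rankine--Hugoniot deficit for $s=[\phi(z)z]/[z]$, and \re{harder} serves exactly as the consistency condition $K(\Theta_\sigma)=0$ that keeps $z$ free of a Dirac mass (cf.\ \re{zdx}). Your \emph{main obstacle} paragraph is unnecessary, since \re{again} involves only the traces at $st^\pm$. If you do assign a value on $\SD$ that way ($z=0$ there, hence $\phi(z)=\phi(0)=0$), it conflicts with the paper's formal choice $\phi(z_\sigma)=s$ in the Remark just before the theorem, which is the value a distributional formulation needs to cancel the $\delta'$ terms, so it is better to drop that claim.
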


We now note two special cases, which avoid significantly restricting  the forms of $\phi$ or $K$ (see Shen \ct{Shen}).
\begin{lemma}
Let $K(\Theta_-)>K(\Theta_+)=0$, or $K(\Theta_+)<K(\Theta_-)=0$. Then $\Rho\Theta_{\sigma}=\phi(z_-)r_+\Theta_+$, or $\Rho\Theta_{\sigma}=-\phi(z_+)r_-\Theta_-$, respectively.
\end{lemma}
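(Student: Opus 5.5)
The plan is to compute the Rankine--Hugoniot deficit \re{now} directly, $\Rho\Theta_\sigma=\alpha_+\Theta_++\alpha_-\Theta_-$, with $\alpha_\pm=\pm(s-\phi(z_\pm))r_\pm$ from \re{occ}. The shock speed is $s=[\phi(z)z]/[z]$ from \re{srz} (equivalently \re{s}). The only structural inputs needed are the hypotheses \re{phihyp}, in particular $\phi(0)=0$ and $\phi_z>0$. Both cases are degenerate endpoints of the overcompressive configuration $K_-\geq 0\geq K_+$ of \re{zs}, in which one of the two states sits exactly on the null set of $K$.

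\emph{First case}, $K(\Theta_-)>K(\Theta_+)=0$. Here $z_+=r_+K_+=0$, so $\phi(z_+)z_+=0$ and $z_-\neq 0$, and \re{srz} collapses to
\[
s=\frac{\phi(z_-)z_-}{z_-}=\phi(z_-).
\]
Then $\alpha_-=-(s-\phi(z_-))r_-=0$, and, using $\phi(z_+)=\phi(0)=0$, $\alpha_+=(s-\phi(z_+))r_+=\phi(z_-)r_+$. Substituting into \re{now} gives $\Rho\Theta_\sigma=\phi(z_-)r_+\Theta_+$. Since $z_->0$ and $\phi_z>0$, the coefficient $\phi(z_-)r_+$ is positive, so $\Theta_\sigma=\Theta_+$ and $\Rho=\phi(z_-)r_+$. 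Then $K(\Theta_\sigma)=0$ holds automatically, and the compatibility condition \re{zero} imposes no restriction on the states. This is the point of the remark about not restricting $\phi$ or $K$.

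\emph{Second case}, $K(\Theta_+)<K(\Theta_-)=0$. Symmetrically, $z_-=0$ and \re{srz} gives $s=(-\phi(z_+)z_+)/(-z_+)=\phi(z_+)$. Hence $\alpha_+=0$ and $\alpha_-=-(\phi(z_+)-\phi(0))r_-=-\phi(z_+)r_-$, so $\Rho\Theta_\sigma=-\phi(z_+)r_-\Theta_-$. Because $z_+<0$ and $\phi$ is increasing with $\phi(0)=0$, we have $-\phi(z_+)>0$, consistent with $\alpha_-\geq 0$ in \re{oc}. Thus $\Theta_\sigma=\Theta_-$, and again $K(\Theta_\sigma)=0$ automatically.

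As a consistency check, I would also obtain both formulas as limits of \re{newr}, setting $K_+=0$ (respectively $K_-=0$) there; for instance the first case gives $(0-\phi_-)r_+r_-(K_-\Theta_+)/(-r_-K_-)=\phi_-r_+\Theta_+$. The only real subtlety is confirming that these formulas are legitimate at the boundary of the hypothesis $K_->0>K_+$ of the preceding lemma. Computing directly from \re{now} rather than via \re{key} sidesteps that issue, since \re{now} requires only $\alpha_\pm\geq 0$, which I have verified above.
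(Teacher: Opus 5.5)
Your proof is correct and is essentially the paper's argument. The only difference is the order of steps: the paper reads off $\alpha_-=0$ (resp.\ $\alpha_+=0$) from \re{zs} to get $s=\phi(z_-)$ (resp.\ $s=\phi(z_+)$), whereas you compute $s$ from \re{srz} first and then deduce the vanishing coefficient, which is equivalent since \re{zs} is just \re{srz} written in terms of $\alpha_\pm$. The remaining steps coincide with the paper's: you substitute $\phi(0)=0$ into \re{now} and note that $\Theta_\sigma=\Theta_\pm$ makes \re{zero} trivial.
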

\proof
If $K(\Theta_-)>0$, $K(\Theta_+)=0$,   then by \re{zs},  $\alpha_-=0$,  $s=\phi(z_-)$ and  $\Theta_{\sigma}=\Theta_+$, and since $\phi(z_+)=\phi(0)=0$, \re{now}  gives
$$\Rho=\alpha_+=\phi(z_-)r_+.$$
 
\noindent Similarly, if $K(\Theta_-)=0$, $K(\Theta_+)<0$ , so by \re{zs}, $\alpha_+=0$,  $s=\phi(z_+)$, and then $\Theta_{\sigma}=\Theta_-$. \re{now} now gives
$$
\Rho=\alpha_-=-\phi(z_+)r_-.
$$
In     both cases, equation \re{zero} is trivially satisfied.

\qed

In general, for $K(\Theta_-)>0>K(\Theta_+)$, the function $K(\Theta)$ affects under which circumstances  the states ${\bf u}_\pm$ satisfy  \re{now}, \re{zs} and \re{zero}.
The most straightforward case,  permitting general left and right states, occurs if $K(\Theta)$ is  an inner product, ${\bf a}\cdot\Theta$, for some nonzero element ${\bf a}\in \RR^{n}$.
This situation appears in work of Shelkovich \ct{S}, Yang and Zhang \ct{YZ1}, and Cheng and Yang \ct{CY1}, which motivates the following.

 \begin{lemma}
 Suppose $\Theta_{\sigma}, \Theta_\pm\in S^{n-1}$  satisfy equation  \re{now} for  $\Rho>0$ and $\alpha_\pm> 0$, 
 together with  \re{zs} for $K(\Theta_-)> 0> K(\Theta_+)$ where \mbox{$K(\Theta)={\bf a}\cdot\Theta$} and ${\bf a}\in S^{n-1}$.
  If $\Theta\in sp\{\Theta_- , \Theta_+\}$, and $\Theta_\sigma$ is given by \re{nophi}, then   $K(\Theta)={\bf \Theta_{\sigma}^\perp\cdot\Theta}$ for some nonzero  projection $\Theta_{\sigma}^\perp\in sp\{\Theta_- , \Theta_+\}$,  $K(\Theta_\sigma)=0$, and condition \re{zero} holds identically.
 \end{lemma}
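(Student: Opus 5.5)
The plan is to use the linearity of $K(\Theta)={\bf a}\cdot\Theta$ on the plane $P=sp\{\Theta_-,\Theta_+\}$. Write ${\bf a}={\bf a}_P+{\bf a}_{P^\perp}$ with ${\bf a}_P$ the orthogonal projection of ${\bf a}$ onto $P$. For $\Theta\in P$ we then have $K(\Theta)={\bf a}_P\cdot\Theta$, so the restriction of $K$ to $P$ is an inner product with an element of $P$. This element is nonzero because $K_-=K(\Theta_-)>0$. This is the candidate for $\Theta_\sigma^\perp$, up to a positive normalisation.

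Next I would check directly that $K(\Theta_\sigma)=0$ for $\Theta_\sigma$ given by \re{nophi}. Since $\Theta_\sigma$ is a combination of $\Theta_\pm$, it lies in $P$. By linearity,
$$K(K_-\Theta_+-K_+\Theta_-)=K_-K_+-K_+K_-=0.$$
The normalising factor $|K_-\Theta_+-K_+\Theta_-|$ is nonzero. Indeed, $\Theta_+\neq\pm\Theta_-$ is assumed after \re{uu-+2}, and in any case $K_->0>K_+$ rules out $\Theta_+=\Theta_-$. If $\Theta_+=-\Theta_-$ the combination is $-(K_-+K_+)\Theta_-$, with $K_+=-K_-$ by linearity, so it vanishes; this is the excluded anomalous case. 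Hence $K(\Theta_\sigma)=0$, and \re{zero} holds identically, whatever the states $\Theta_\pm$ in the open half-spaces $K>0$ and $K<0$. This is the point of the lemma: no locus restriction appears, in contrast to the general condition \re{harder}.

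It remains to identify ${\bf a}_P$ as a vector orthogonal to $\Theta_\sigma$ within $P$. Since $P$ is two-dimensional and ${\bf a}_P\cdot\Theta_\sigma=K(\Theta_\sigma)=0$ with ${\bf a}_P\neq{\bf 0}$, the vector ${\bf a}_P$ spans the orthogonal complement of $\Theta_\sigma$ in $P$. So ${\bf a}_P=c\,\Theta_\sigma^\perp$, where $\Theta_\sigma^\perp$ is a unit vector in $P$ orthogonal to $\Theta_\sigma$ and $c=|{\bf a}_P|>0$. The orientation of $\Theta_\sigma^\perp$ is fixed so that $\Theta_\sigma^\perp\cdot\Theta_->0$.

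The main obstacle is reconciling this constant $c$ with the statement's literal claim $K(\Theta)=\Theta_\sigma^\perp\cdot\Theta$. I would interpret $\Theta_\sigma^\perp$ as the unnormalised projection ${\bf a}_P$ itself, a "nonzero projection" as the statement words it, rather than a unit vector. This removes the constant. I would also note that consistency with \re{now} and \re{zs} is automatic. Substituting $\alpha_\pm$ from \re{oc} with $s$ from \re{srz} reproduces \re{newr}, so $\Theta_\sigma$ from \re{now} and \re{nophi} agree. Positivity of $\alpha_\pm$ is exactly the overcompressive condition \re{com}, which places $\Theta_\sigma$ on the arc between $\Theta_-$ and $\Theta_+$ where $K$ changes sign.
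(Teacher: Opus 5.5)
Your proposal is correct and is essentially the paper's proof. You take $\Theta_\sigma^\perp$ to be the unnormalised orthogonal projection of ${\bf a}$ onto $sp\{\Theta_-,\Theta_+\}$, nonzero because $K(\Theta_\pm)\neq 0$, and obtain \re{zero} from linearity via $K(K_-\Theta_+-K_+\Theta_-)=0$. The one small difference is that the paper gets linear independence of $\Theta_\pm$ from the hypothesis $\varrho>0$ itself (the antipodal case forces $\alpha_+=\alpha_-$ by \re{zs}, hence $\varrho=0$ by \re{now}), rather than from the standing assumption $\Theta_+\neq\pm\Theta_-$, which keeps the lemma self-contained.
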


 \proof

 Recall that if $\Theta_\pm$ are linearly dependent, then $\Theta_+=-\Theta_-$ by \re{zs} since $K(.)$  is odd and  $\alpha_\pm>0$,  finally implying that $\alpha_+=\alpha_-$. 
 As a result, equation \re{now}  gives $\Rho\Theta_{\sigma}=\alpha_+( -\Theta_-) +\alpha_- \Theta_- = {\bf 0}$ and so $\Rho=0$.
   Thus $\Rho>0$  means $\Theta_\pm$ must be linearly independent. Letting $\Theta_{\sigma}^\perp$
   denote the orthogonal projection of ${\bf a}$ onto $sp\{\Theta_- , \Theta_+\}$,   gives  ${\bf a}\cdot\Theta=\Theta_{\sigma}^\perp\cdot\Theta$ for all $\Theta\in sp\{\Theta_-, \Theta_+\}$. Since $K(\Theta_\pm)\neq 0$,   ${\bf a}$ cannot be perpendicular to $sp \{\Theta_- , \Theta_+\}$, and so 
 $\Theta_{\sigma}^\perp$ is not null and $K(\Theta)=\Theta_{\sigma}^\perp\cdot\Theta$.
 Verifying condition \re{zero} is immediate, since  $K(K_-\Theta_+-K_+\Theta_-)=0$ for $K$ linear.
 
  \qed
 
 This means we are  able to solve equations \re{now} and \re{zero}, connecting arbitrary left and right states  under the conditions $K_->0>K_+$ when $K={\bf a}\cdot\Theta$,
 and we note in passing the form of $\Theta_{\sigma}^\perp\in sp\{\Theta_-, \Theta_+\}$,
 \be
 \Theta_{\sigma}^\perp = 
c \,\frac{(K_+- K_-\Theta_+\cdot\Theta_-)\Theta_++(K_--K_+\Theta_+\cdot\Theta_-)\Theta_-}{|(K_+-K_-\Theta_+\cdot\Theta_-)\Theta_++(K_--K_+\Theta_+\cdot\Theta_-)\Theta_-|}\,,
\eqn{perp}
\ee
for some $|c|\leq 1$.

 Now we  remove the assumption of linearity for $K$  and make the hypothesis that  $K(\Theta)=\tilde{K}(q)$ for some  $C^1$ function, $\tilde{K}$, of $q={\bf a}\cdot\Theta,$ with $ |\bf{a}|=1$. The antipodal property \re{odd} for $K(\Theta)$ translates to $\tilde{K}(q)$ being odd, which we supplement with monotonicity.

 \begin{lemma}
 Suppose $\Theta_{\sigma}, \Theta_\pm\in S^{n-1}$  satisfy equation  \re{now} for  $\Rho>0$ and $\alpha_\pm> 0$, 
 together with  \re{zs} for $K(\Theta_-)> 0> K(\Theta_+)$. Let \mbox{$K(\Theta)=\tilde{K}(q)$}, $q={\bf a}\cdot\Theta$, $|\bf{a}|=1$,  $\tilde{K}(q)\in C^1$ odd and $\tilde{K}'(q)>0, q\in(-1, 1)$.  
  If $\Theta_\sigma$ is given by \re{nophi}, then    condition \re{zero} holds provided the states $\Theta_-$, $\Theta_+$ are connected by 
  \be
  \frac{\tilde{K}(q_-)}{q_-}
  = \frac{\tilde{K}(q_+)}{q_+}.
  \eqn{genhug}
  \ee
 \end{lemma}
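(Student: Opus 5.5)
We need to verify that $K(\Theta_\sigma)=0$, where $\Theta_\sigma$ is the normalized vector $(K_-\Theta_+-K_+\Theta_-)/|K_-\Theta_+-K_+\Theta_-|$. Since $\tilde K$ is odd and strictly increasing on $(-1,1)$, its only zero is at $q=0$. So \re{zero} is equivalent to
$$
{\bf a}\cdot\Theta_\sigma=0, \quad\text{i.e.}\quad K_-q_+-K_+q_-=0,
$$
with $K_\pm=\tilde K(q_\pm)$ and $q_\pm={\bf a}\cdot\Theta_\pm$. The plan is therefore to reduce \re{zero} to this scalar identity and then compare it with \re{genhug}.

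First we record the consequences of the hypotheses. The assumption $K(\Theta_-)>0>K(\Theta_+)$, together with $\tilde K$ odd and increasing, gives $q_->0>q_+$. In particular neither $q_\pm$ is zero, so the quotients in \re{genhug} are well defined. Next, as in the proof of the preceding lemma, $\Rho>0$ with $\alpha_\pm>0$ forces $\Theta_\pm$ to be linearly independent. Indeed, the oddness of $K$ and \re{zs} exclude $\Theta_+=-\Theta_-$ when $\Rho>0$. Hence $K_-\Theta_+-K_+\Theta_-$ is a combination of independent vectors with coefficients $K_->0$ and $-K_+>0$, so it is nonzero and $\Theta_\sigma$ in \re{nophi} is well defined.

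Then we compute directly:
$$
{\bf a}\cdot(K_-\Theta_+-K_+\Theta_-)=\tilde K(q_-)\,q_+-\tilde K(q_+)\,q_- .
$$
Dividing by $q_+q_-\neq0$, this vanishes exactly when $\tilde K(q_-)/q_-=\tilde K(q_+)/q_+$, which is \re{genhug}. So \re{genhug} gives $q_\sigma={\bf a}\cdot\Theta_\sigma=0$. Since $\tilde K$ is odd, $\tilde K(0)=0$, and therefore $K(\Theta_\sigma)=\tilde K(q_\sigma)=0$, which is \re{zero}. By strict monotonicity of $\tilde K$ the converse also holds: if \re{zero} holds then $q_\sigma=0$, which gives \re{genhug}. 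This is worth noting, since it shows that \re{genhug} defines the locus of admissible states referred to in Section 5.

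The main point needing care is the non-vanishing and normalizability of $K_-\Theta_+-K_+\Theta_-$, and the exclusion of $q_\pm=0$; both are handled by the sign information above. Everything else is the one-line linear computation in ${\bf a}$, in which the linearity of ${\bf a}\cdot\Theta$ does the work that linearity of $K$ did in the previous lemma.
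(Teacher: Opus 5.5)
Your proof is correct and follows the paper's argument: the paper also takes the inner product of the formula for $\Theta_\sigma$ with ${\bf a}$ and notes that the numerator $\tilde K(q_-)q_+-\tilde K(q_+)q_-$ vanishes under the stated relation, so $K(\Theta_\sigma)=\tilde K(0)=0$. Two of your steps are refinements the paper leaves implicit, and both are sound: you check that the normalizing denominator is nonzero, by excluding $\Theta_+=-\Theta_-$ via $\Rho>0$, and you show the converse from the strict monotonicity of $\tilde K$.
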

 \proof
 Since $K(\Theta_-)> 0> K(\Theta_+)$ implies $\tilde{K}(q_-)>0>\tilde{K}(q_+)$, our assumptions mean that $q_\pm\neq 0$ and, in particular, ${\bf a}$ is not  normal to $sp\{\Theta_-, \Theta_+\}$.
  For $q_\sigma={\bf a}\cdot\Theta_\sigma$, 
 we  must however seek  sets of pairs $\{q_-, q_+\}\in(-1, 1)^2$ for which $K(\Theta_\sigma)=\tilde{K}(0)=0$.
But, taking the inner product of equation \re{key} with ${\bf a}$,   gives
\be
q_\sigma=\frac{\tilde{K}(q_-)q_+-\tilde{K}(q_+)q_-}{|\tilde{K}(q_-)\Theta_+-\tilde{K}(q_+)\Theta_-|},
\eqn{keyq}
\ee
which is zero by \re{genhug}, and the result follows.

 \qed

\begin{corollary}
Let $\Theta_\sigma$ and $\tilde{K}(.)$ satisfy the conditions of the Lemma above. Then $q_+=-q_-$, and
\be
\Theta_\sigma=\frac{\Theta_++\Theta_-}{|\Theta_++\Theta_-|}.
\ee
\proof
Adopting condition \re{odd} means that $\frac{\tilde{K}(q)}{q}$ is even, and so \re{genhug} implies $q_+=\pm q_-$, but $q_+=q_-$ implies $\tilde{K_+}(q)=\tilde{K}(q_-)$, so $\tilde{K}(q)$ does not change sign. Therefore $q_+=-q_-$and, from \re{key}, which here becomes

\be
\Theta_\sigma=\frac{\tilde{K}(q_-)\Theta_+-\tilde{K}(q_+)\Theta_-}{|\tilde{K}(q_-)\Theta_+-\tilde{K}(q_+)\Theta_-|},
\eqn{keyq}
\ee
together with $\tilde{K}(q_+)=-\tilde{K}(q_-)$, we obtain

\be
\Theta_\sigma=\frac{\tilde{K}(q_-)(\Theta_++\Theta_-)}{|\tilde{K}(q_-)||\Theta_++\Theta_-|}=\frac{\Theta_++\Theta_-}{|\Theta_++\Theta_-|}
\ee
since $\tilde{K}(q_-)>0$.
\qed

\end{corollary}

\begin{remark}

Although here it is necessary   to satisfy $q_+=-q_-$ for a given state $\Theta_-$ to be connected to a  choice of  possible  $\Theta_+$ by a delta shock, it is not sufficient. In particular, $\Theta_\sigma$ is  undefined under \re{odd} for $\Theta_+=-\Theta_-$, yet no delta shock  is  present, since $\Rho=0$ by \re{nophi}. Clearly, monotonicity of $\tilde{K}$ also requires all such delta shock   connections to satisfy $q_->0$.
\end{remark}

 
\begin{remark} By \re{intsigma}, \re{now}, we observe that

\be
\int_{st^-}^{st^+}{\bf u}\, dx=\Rho\,\Theta_{\sigma}\,t=((s-\phi_+)r_+\Theta_++(\phi_--s)r_-\Theta_-)\,t
\eqn{intsigma2}
\ee
and so the ansatz \re{newer} satisfies the requirement of the self-similar formula \re{uKK}. Further, equation \re{now} also lets us verify that $z(x, t)$ satisfies \re{z-+} since $K(\Theta_{\sigma})=0$, 

\be
\int_{st^-}^{st^+}z\,dx=\int_{st^-}^{st^+}w(t)K(\Theta_{\sigma})\delta (x-st)\breve\delta (x-st)\,dx=0
\eqn{zdx}
\ee
and finally,
\be
\int_{st^-}^{st^+}r\,dx=\int_{st^-}^{st^+}w(t)\delta (x-st)\breve\delta (x-st)\,dx=\Rho\,t.
\eqn{rdx}
\ee
 \end{remark}
 \subsection{The Case $\phi=\phi(\Theta)$}
 \label{deltatheta}

 Recalling equation \re{now},  we must now obtain a pair $(s, \Theta_{\sigma})$ satisfying
\be \Rho\,\Theta_{\sigma}=(s-\phi_+)r_+\Theta_++(\phi_--s)r_-\Theta_-,
\eqn{ful}
\ee
for which  $s=\phi(\Theta_{\sigma})$,
 and in order to have  an admissible delta-shock in this  case, we need to obtain such a solution subject to  the  inequalities
 \be
 \phi_-> s>\phi_+.
 \eqn{ent}
 \ee
Setting ${\mathcal F}{(\Theta)}=\Rho^{-1}((\phi(\Theta)-\phi_+)r_+\Theta_++(\phi_--\phi(\Theta))r_-\Theta_-)$, we  look for a fixed point, $\Theta=\Theta_{\sigma}$, to the equation
 \be
 \Theta=\mathcal{F}(\Theta)
 \eqn{fix}
 \ee
 subject to \re{ent} with $\Theta$ and $\mathcal{F}(\Theta)\in sp\{\Theta_-, \Theta_+\}\cap S^{n-1}$. Noting that since $sp\{\Theta_-, \Theta_+\}$ is either one or two-dimensional, the one-dimensional case having been shown  in Section \ref{sec:thetanice} not to contain any Dirac mass, we continue under the assumption that $\Theta_-\cdot\Theta_+\neq\pm 1$. Under this assumption, we have the following  for continuous $\phi(\cdot)$.
 \begin{lemma}
 Subject to the inequalities \re{ent}, there exists a fixed point, $\Theta_\sigma$, to \re{fix}.
 \end{lemma}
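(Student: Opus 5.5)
The plan is to reduce \re{fix} to a one-dimensional fixed-point problem on the shorter great-circle arc joining $\Theta_-$ and $\Theta_+$, and then apply the intermediate value theorem. Since $\Theta_-\cdot\Theta_+\neq\pm1$, the span $sp\{\Theta_-,\Theta_+\}$ is two-dimensional. Its intersection with $S^{n-1}$ is a great circle. Parametrize the shorter arc by
$$\Theta(\tau)=\frac{\sin((1-\tau)\omega)\,\Theta_-+\sin(\tau\omega)\,\Theta_+}{\sin\omega},\qquad \tau\in[0,1],\quad \cos\omega=\Theta_-\cdot\Theta_+,$$
so that $\omega\in(0,\pi)$. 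By \re{now} we expect $\Theta_\sigma$ on this arc. Throughout, I take $r_\pm>0$ and $\phi_->\phi_+$, which is the situation left open at the end of Section \ref{sec:thetanice}.

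\emph{Key observation.} For a speed $s\in[\phi_+,\phi_-]$, consider
$$V(s)=(s-\phi_+)r_+\Theta_++(\phi_--s)r_-\Theta_-.$$
Its coefficients are nonnegative and not both zero, so $V(s)\neq{\bf 0}$ by linear independence of $\Theta_\pm$. Hence $V(s)/|V(s)|$ lies on the arc, and we may write it as $\Theta(h(s))$ for some $h(s)\in[0,1]$. The ratio of coefficients $(s-\phi_+)r_+/((\phi_--s)r_-)$ increases strictly from $0$ to $\infty$. Therefore $h$ is continuous and strictly increasing, with $h(\phi_+)=0$ and $h(\phi_-)=1$. In particular $h(s)\in(0,1)$ exactly when $\phi_+<s<\phi_-$.

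\emph{The map.} Since $\phi$ need not take values in $[\phi_+,\phi_-]$ along the arc, I first clamp it. Define
$$\psi(\tau)=\min\{\phi_-,\max\{\phi_+,\phi(\Theta(\tau))\}\},\qquad G(\tau)=h(\psi(\tau)).$$
Continuity of $\phi$ makes $G:[0,1]\to[0,1]$ continuous. At the endpoints, $\psi(0)=\phi(\Theta_-)=\phi_-$, so $G(0)=1$; and $\psi(1)=\phi_+$, so $G(1)=0$. Applying the intermediate value theorem to $G(\tau)-\tau$, which goes from $1$ to $-1$, gives some $\tau^*\in(0,1)$ with $G(\tau^*)=\tau^*$.

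\emph{Undoing the clamp.} This step is the main point to check. Because $G(\tau^*)=\tau^*\in(0,1)$, the strict monotonicity of $h$ forces $\phi_+<\psi(\tau^*)<\phi_-$. So the clamp is inactive, and $s:=\phi(\Theta(\tau^*))=\psi(\tau^*)$ satisfies the strict inequalities \re{ent}. Now set $\Theta_\sigma=\Theta(\tau^*)$ and $\Rho=|V(s)|>0$. Then \re{ful} holds with $s=\phi(\Theta_\sigma)$; equivalently $\Theta_\sigma=\mathcal F(\Theta_\sigma)$, both lying in $sp\{\Theta_-,\Theta_+\}\cap S^{n-1}$. The coefficients $\alpha_\pm$ in \re{occ} are then strictly positive, consistent with \re{now}.

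The only genuine obstacle is that $\phi$ may leave the interval $[\phi_+,\phi_-]$ along the arc. The clamping device handles this, because the endpoint values of $G$ force the fixed point into the interior. No uniqueness is claimed, and none is expected for a general continuous $\phi$.
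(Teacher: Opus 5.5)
Your argument is correct and follows the same overall strategy as the paper: restrict to the short great-circle arc from $\Theta_-$ to $\Theta_+$ and apply the intermediate value theorem. The difference lies in the auxiliary function.

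The paper parametrizes the arc as $\Theta(\gamma)=p^{-1}(p_-\Theta_-+p_+\Theta_+)$. It applies the IVT to the collinearity test $f(\gamma)=p_-\alpha_+-p_+\alpha_-$, which is positive at $\gamma_-$ and negative at $\gamma_+$. A zero of $f$ means $(\alpha_-,\alpha_+)$ is a positive multiple of $(p_-,p_+)$, i.e.\ $\mathcal F(\Theta(\gamma))=\Theta(\gamma)$.

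Along the way, the paper asserts $\alpha_\pm>0$ on the arc directly from $\phi_->\phi_+$. That is not justified if $\phi$ leaves $[\phi_+,\phi_-]$ on the arc. It can be repaired: since $\alpha_-/r_-+\alpha_+/r_+=\phi_--\phi_+>0$, mixed signs (or a vanishing $\alpha_\pm$) are ruled out at an interior zero of $f$.

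Your clamp, combined with the strictly increasing map $h$, handles exactly this point explicitly. It produces a genuine continuous self-map of $[0,1]$, and it yields the strict inequalities \re{ent} at the fixed point automatically.

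The paper's formulation is set up to give uniqueness via $f'<0$ when $\phi(\Theta(\gamma))$ is decreasing in $\gamma$. Your version gives this just as easily: then $G$ is nonincreasing, so $G(\tau)-\tau$ is strictly decreasing and has a single zero.
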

 \proof
We introduce a parametrized vector field, $\Theta(\gamma)\in sp\{\Theta_-, \Theta_+\}\cap S^{n-1}$, employing a pair of nonnegative, smooth, functions, $p_\pm(\gamma)$, such that 

\be
\Theta(\gamma)=p^{-1}(\gamma)(p_-(\gamma)\,\Theta_-+p_+(\gamma)\,\Theta_+),\quad\gamma\in [\gamma_-, \gamma_+],
\eqn{fixl}
\ee
 where $p^2={p_-}^2+2p_-p_+\Theta_-\cdot\Theta_++p_+^2$, $0<\gamma_+-\gamma_-<\pi$ and $|\Theta_-\cdot\Theta_+|<1$. Suppose  that
 $p'_-(\gamma)<0<p'_+(\gamma)$ and $p(\gamma)>0$ for all $\gamma\in(\gamma_-, \gamma_+)$,  while  $p_-(\gamma_-), p_+(\gamma_+)$ are nonzero with $p_-(\gamma_+)=p_+(\gamma_-)=0,$ so that 
 $$\Theta(\gamma_-)=\Theta_-, \quad\Theta(\gamma_+)=\Theta_+\quad \phi(\Theta_-)>\phi(\Theta_+).$$ 
 The conditions ensure that $\Theta_\gamma(\gamma)\neq {\mathbf 0}$.
Next, using the definition of $\mathcal F$ above,
\be
\mathcal{F}(\Theta(\gamma))=\Rho^{-1}(\gamma)(\alpha_-(\Theta(\gamma))\Theta_-+\alpha_+(\Theta(\gamma))\Theta_+), \quad\gamma\in [\gamma_-, \gamma_+],
\eqn{fixr}
\ee
where $\alpha_-(\Theta(\gamma))=(\phi_--\phi(\Theta(\gamma)))r_-$,   $\alpha_+(\Theta(\gamma))=
(\phi(\Theta(\gamma))-\phi_+)r_+$,  
\linebreak $\phi_\pm=\phi(\Theta_\pm)$, and 
$\Rho^2={\alpha_-}^2+2\alpha_-\alpha_+\Theta_-\cdot\Theta_++\alpha_+^2.$
 In particular, at $\gamma=\gamma_\pm$,
$$\mathcal{F}(\Theta(\gamma_-))=\Theta_+,\quad \mathcal{F}(\Theta(\gamma_+))=\Theta_-.$$
Thus $\gamma=\gamma_\pm$ are not  fixed points for \re{fix}.

Next, by using \re{fixl} and \re{fixr}  and consecutively taking inner products of \re{fix} with   elements $\Theta_{\pm\perp}\in sp\{\Theta_-, \Theta_+\}\cap S^{n-1}$ orthogonal to $\Theta_\pm$, means we require a solution to the $\gamma-$dependent system

\be
p^{-1}p_-=\Rho^{-1}\alpha_-,\quad  p^{-1}p_+=\Rho^{-1}\alpha_+,\quad \gamma\in (\gamma_-, \gamma_+),
\ee
in order to find a fixed point. Given that $\phi_->\phi_+$ and $r_\pm>0$, it follows that $\alpha_\pm>0$ and so,  since $p_\pm>0$,
we  have to show there exists $\gamma\in (\gamma_-, \gamma_+)$ such that $f(\gamma)=0$, where

\be
f(\gamma)\equiv p_-\alpha_+-p_+\alpha_-.
\ee
However, since $f(\gamma_-)=(\phi_--\phi_+)r_+>0$ and $f(\gamma_+)=-(\phi_--\phi_+)r_-<0$, the Intermediate Value theorem supports at least one value of $\gamma\in (\gamma_-, \gamma_+)$ where $f(\gamma)=0$,  which delivers a solution to \re{fix}. Local, or global, uniqueness follows. Assuming, for instance, $\phi(\Theta(\gamma))_\gamma<0$ for all $\gamma\in (\gamma_-, \gamma_+)$, $0<\gamma_+-\gamma_-<\pi$, global uniqueness can be found by computing $f'(\gamma)$ for  $\phi_->\phi_+$. In this case, using the  conditions stated above for $p'_\pm$, means  $f'(\gamma)={p'}_{-}\alpha_+ -{p'}_{+}\alpha_- +\phi_\gamma (p_-r_++p_+r_-)<0$, so that no further zeros of $f(\gamma)$  occur. \qed

\bigskip
Given the existence of $\Theta_\sigma$, we introduce  the   element $\Theta_{\sigma\perp}\in sp\{\Theta_-, \Theta_+\}$, perpendicular to $\Theta_\sigma$, which satisfies  $\Theta_{\sigma\perp}\cdot\Theta_->0>\Theta_{\sigma\perp}\cdot\Theta_+$ (recall that $\Theta_\sigma$ lies in the wedge  between $\Theta_-$ and $\Theta_+$). If we set $q=\Theta_{\sigma\perp}\cdot\Theta$ and take the inner product of $\Theta_{\sigma\perp}$ with both sides of equation \re{ful},   this implies 

\be
0=(s-\phi_+)r_+q_++(\phi_--s)r_-q_-, 
\eqn{fin}
\ee
or,
\be
s=\frac{\phi(\Theta_-)q_-r_--\phi(\Theta_+)q_+r_+}{q_-r_--q_+r_+},
\eqn{fin2}
\ee
that is, $s=\phi(\Theta_\sigma)=\frac{[\phi qr]}{[qr]}$.
The amplitude, $\Rho$,  then follows  by substituting  $s$ into \re{ful},   leading to

\be
\Rho=\frac{[\phi]}{[qr]}(r_-r_+)\{q_-^2-2q_-q_+\Theta_-\cdot\Theta_++q_+^2\}^{1/2},
\eqn{fin3a}
\ee
or as a result, 
\be
\Rho=\frac{[\phi]}{[qr]}r_-r_+\sin\beta, 
\eqn{fin3b}
\ee
where $\beta=\gamma_+-\gamma_-$ denotes the interior angle  between $\Theta_-$ and $\Theta_+$.

\bigskip
Although the results so far have  established the existence of $\Theta_\sigma$,    we yet have to find a   formula for $\gamma_\sigma$ (where $\Theta_\sigma=\Theta(\gamma_\sigma)$), and this turns out to be explicit only in particular cases. To obtain these, with $0<\gamma_+-\gamma_-<\pi$ (see \re{ful}, \re{ent}) and $\phi_->\phi_+$,
 we first  set $\gamma_{\sigma\perp}=\gamma_\sigma-\pi/2<\gamma_-<\gamma_\sigma<\gamma_+,$   in which case $q=\cos(\gamma_{\sigma\perp}-\gamma)=\cos(\gamma_\sigma-\pi/2-\gamma)=\sin(\gamma_\sigma-\gamma)$, $q_->0>q_+$. 
Inserting  $q_\pm=\cos\gamma_\pm\sin\gamma_\sigma-\sin\gamma_\pm\cos\gamma_\sigma$ into \re{fin2} now gives

\be
s=\frac{[\phi r\cos\gamma]\sin\gamma_\sigma-[\phi r\sin\gamma]\cos\gamma_\sigma}{
[r\cos\gamma]\sin\gamma_\sigma-[r\sin\gamma]\cos\gamma_\sigma}.
\eqn{fin3}
\ee
Next, we choose a (data dependent) orthonormal basis\footnote{$\bf{e}=(\sin\beta)^{-1}(\Theta_+\sin\gamma_--\Theta_-\sin\gamma_+), \bf{f}=(\sin\beta)^{-1}(\Theta_+\cos\gamma_--\Theta_-\cos\gamma_+)$.} $\{\bf{e}, \bf{f}\}$ for $sp\{\Theta_-, \Theta_+\}$  in order to write $\Theta(\gamma)$ 
as $\cos(\gamma){\bf e}+\sin(\gamma){\bf f}, \gamma\in (\gamma_-, \gamma_+)$   or, equivalently, in the form $\Theta(\gamma) = {(1+\tau^2)^{-1/2}}({\bf e}+ \tau{\bf f})$, where $\tau=\tan\gamma$.  We   then denote $\phi(\Theta(\gamma))$  by $\tilde{\phi}(\tau)$,   so that $\tilde{\phi}(\tau_\sigma)=s$ where $\tau_\sigma\in\mathbb{R}$ can be sought by solving
\be
\phi({(1+\tau_\sigma^2)^{-1/2}}({\bf e}+ \tau_\sigma{\bf f}))=
\tilde{\phi}(\tau_\sigma)=\frac{[\phi r\cos\gamma]\tau_\sigma-[\phi r\sin\gamma]}{[r\cos\gamma]\tau_\sigma-[r\sin\gamma]}
\equiv\frac{[\phi r\Theta]\cdot(\tau_\sigma, -1)}{[r\Theta]\cdot(\tau_\sigma, -1)}.
\eqn{fin4}
\ee
However in  many cases we can equivalently use a \lq projective\rq\, tangent $\tau_{ij}=\Theta_i/\Theta_j, i\neq j$ to parametrize $\Theta(\gamma)$(see~Appendix \ref{A}), in which \re{fin4} can more simply be replaced with
\be
\tilde{\phi}(\tau_{ij\sigma})=\frac{[\phi r\Theta_j]\tau_{ij\sigma}-[\phi r\Theta_i]}
{[r\Theta_j]\tau_{ij\sigma}-[r\Theta_i]}\equiv\frac{[\phi r(\Theta_j, \Theta_i)]\cdot(\tau_{ij\sigma}, -1)}{[r(\Theta_j, \Theta_i)]\cdot(\tau_{ij\sigma}, -1)}
\eqn{stij1}
\ee
for which $\tilde{\phi}(\tau_{ij\sigma})=s$. In particular, since \re{ful} reads
$\varrho\Theta_\sigma=[\phi\uu]-s[\uu]$
and   $\uu$ can be written as
 \be
 \uu=u_{j\sigma}(\tau_{1j\sigma},\dots,\tau_{jj\sigma},\dots, \tau_{ij\sigma},\dots, \tau_{nj\sigma}),\quad i>j,
 \eqn{tautau}
 \ee
 equation \re{ful} can be expressed as

 \be
 \varrho\Theta_\sigma=([\phi u_j]-s[u_j])(\tau_{1j\sigma},\dots,1 ,\dots, \tau_{ij\sigma},\dots, \tau_{nj\sigma})
 \eqn{fultau}
 \ee
 where
 \be
 \tau_{kj\sigma}=\frac{[\phi u_k]-s[u_k]}{[\phi u_j]-s[u_j]}.
\eqn{taubit}
\ee
\subsubsection{The Linear Case}
\noindent If  $\tilde{\phi}$ is linear in $\tau_{ij}$, say $\tilde{\phi}(\tau_{ij\sigma})=\tilde{\tau}_{ij\sigma}=u_i/u_j$,  then  \re{taubit} becomes  quadratic for $k=i$, with  $s=\tau_{ij\sigma}$ satisfying
\be
s=\frac{[(u_i^2/u_j, u_i )]\cdot(1, -s)}{[(u_i, u_j)]\cdot(1, -s)} , \quad (u_i/u_j)_->s>(u_i/u_j)_+,
\eqn{lin}
\ee
from which one obtains
\be
u_{j-}(u_{i-}/u_{j-}-s)^2
=u_{j+}(s-u_{i+}/u_{j+})^2.
\eqn{quad2}
\ee		
This leads to admissible $s$ being given by
\be
s=\frac{u_{i+}|u_{j-}|^{1/2}+u_{i-}|u_{j+}|^{1/2}}{u_{j+}|u_{j-}|^{1/2}+u_{j-}|u_{j+}|^{1/2}},
\eqn{lins}
\ee
provided $u_{j-}u_{j+}>0$, and then
\re{fultau} reduces to
\be
\varrho\Theta_\sigma=([u_i]-s[u_j])(\tau_{1j\sigma},\dots,1 ,\dots, s,\dots, \tau_{nj\sigma})
\eqn{taulin}
\ee
where
\be
\tau_{kj\sigma}=\frac{[\frac{u_iu_k}{u_j}]-s[u_k]}{[u_i]-s[u_j]}, \,\,k\neq i.
\eqn{bitlin}
\ee
This reduces  further to
\be
\varrho\Theta_\sigma=sgn\{u_{j\pm}\}[\frac{u_i}{u_j}]\sqrt{u_{j-}u_{j+}}\,(\varsigma_{j}(u_1),\dots,1 ,\dots, s,\dots, \varsigma_{j}(u_n))
\eqn{lin}
\ee
where 
\be
\varsigma_j(u_k)=\frac{u_{k+}\sqrt{|u_{j-}|}+u_{k-}\sqrt{|u_{j+}|}}{u_{j+}\sqrt{|u_{j-}|}+u_{j-}\sqrt{|u_{j+}|}},\, k\neq i.
\eqn{wherelin}
\ee

The next examples involve linear $\tilde{\phi}$, and follow from the properties stated above.

\subsubsection{Pressureless  Gas -  Relativistic and Nonrelativistic Cases}
\label{gas}

Finally, we examine the 3x3 system of equations describing a pressureless gas. Taken either in  its relativistic, or in its limiting nonrelativistic form, the basic structure is formally identical, stemming from the local conservation laws in the general-relativistic hydrodynamics equations, which involve the stress-energy tensor $T^{\mu\nu}$ and matter density current $J^{\mu}$, (\cite{Font}, \cite{LSZ}).

First we give a brief summary of the relativistic setting (for one spatial dimension) and then its asymptotic limit (as the speed of light tends to infinity) to obtain the corresponding nonrelativistic equations.

Introducing $x^0=ct, x^1=x$, one defines space-time displacements via $ds^2=-(dx^0)^2+(dx^1)^2$ as timelike if $ds^2<0$, in which case the proper time, $\tau$, is given by $c^2d\tau^2=-ds^2$. Then $d\tau=dt(1-\beta^2)^{1/2}$, where $\beta=\frac{1}{c}\frac{dx}{dt}\equiv\frac{v}{c}$. Letting 
$\Gamma=(1-\beta^2)^{-1/2}$, set $\rho_0$ to represent  rest mass density, and $\rho=\rho_0\Gamma$. Also set $e=\rho_0c^2+\epsilon$ to denote  the relativistic enthalpy, which contains both the rest-mass energy and $\epsilon\geq 0$, the  internal energy density of the gas in its rest frame.

Additionally, suppose $v^\mu(t)=\frac{dx^\mu}{dt}, \mu=0, 1,$ and $U^\mu(t)=\frac{1}{c}\frac{dx^\mu}{d\tau}=\frac{\Gamma}{c}v^\mu$, so that $(U^0, U^1)=\Gamma(1, \beta)$.   The current, $J^\mu$, now  satisfies the continuity equation
$$
\frac{\partial}{\partial x^\mu}(\rho_0U^\mu)=0,
$$
or
$$
\frac{1}{c}\frac{\partial}{\partial t}(\rho_0\Gamma)+\frac{\partial}{\partial x}(\rho_0\Gamma\frac{v}{c})=0,
$$
and   therefore 
\be
\rho_t+(\rho v)_x=0.
\eqn{J}
\ee
Meanwhile, the stress-energy tensor, $T^{\mu\nu}$, has components 
$$T^{00}=\Gamma^2 e, T^{01}=T^{10}=\Gamma^2 e\beta, T^{11}=\Gamma^2 e\beta^2$$
which satisfy the Bianchi identities
$$\frac{\partial}{\partial x^\mu}T^{\mu\nu}=0, \quad \nu=0, 1.$$
Setting $E=\Gamma^2e$ therefore gives
$$
\frac{1}{c}\frac{\partial}{\partial t}E+\frac{\partial}{\partial x}(E\beta)=0,
$$
and
$$
\frac{1}{c}\frac{\partial}{\partial t}(E\beta)+\frac{\partial}{\partial x}(E\beta^2)=0,
$$
from which one obtains simply
\be
E_t+(Ev)_x=0,
\eqn{T0}
\ee
and
\be
(Ev)_t+(Ev^2)_x=0.
\ee
In our setting, we   choose as variables
\be
u_1=E,\quad u_2=Ev,\quad u_3=\rho,
\eqn{3x3}
\ee
with $\tilde{\phi}=\tau_{21}=v$. Then \re{lin}, \re{wherelin} become
\be
\varrho\Theta_\sigma=[v]\sqrt{E_-E_+}\,(1 , s, \frac{\rho_+\sqrt{E_-}+\rho_-\sqrt{E_+}}{\sqrt{E_-}+\sqrt{E_+}}),
\eqn{gasrel}
\ee
with
\be
s=\frac{v_-\sqrt{E_-}+v_+\sqrt{E_+}}{\sqrt{E_-}+\sqrt{E_+}}.
\eqn{sgasrel}
\ee

To obtain correspondence with classical pressureless gas equations one  assumes $\beta$ to be small, \lq$c\rightarrow\infty$\rq,  in order to approximate $\Gamma$ by $1+\frac{1}{2}\beta^2$ and $\Gamma^2$ by $1+\beta^2$. Substituting these in the above equations, and dropping higher order terms in $\beta^2$ later,
$T^{00}=\Gamma^2e=\Gamma\rho c^2+\Gamma^2e$ is approximated by $(1+\frac{1}{2}\beta^2)\rho c^2+\epsilon$, similarly with $T^{10}=\Gamma^2e\beta$ and 
$T^{11}=\Gamma^2e\beta^2$. 
Then, the approximation to $\frac{\partial}{\partial x^{\mu}}T^{\mu 1}=0$  (with $\rho_0$ and $\rho$ now identified) gives
$$
(\rho v)_t+(\rho v^2)_x=0,
$$
while using the retained continuity equation \re{J} to simplify $\frac{\partial}{\partial x^{\mu}}T^{\mu 0}=0$    leads  to
$$
(\frac{\rho v^2}{2}+\epsilon)_t+((\frac{\rho v^2}{2}+\epsilon)v)_x=0.
$$
 Once again relabelling, we now set 
 \be
u_1=\rho, \quad u_2=\rho v, \quad u_3 = \frac{\rho v^2}{2}+\epsilon={\mathcal E}
\eqn{3x3n}
\ee
in order  to have $\tilde{\phi}=\tau_{21}=v$, as before. The relativistic results  \re{gasrel} and \re{sgasrel} are then replaced with (\ct{YZ2})

\be
\varrho\Theta_\sigma=[v]\sqrt{\rho_-\rho_+}\,(1 , s, \frac{{\mathcal E}_+\sqrt{\rho_-}+{\mathcal E}_-\sqrt{\rho_+}}{\rho_+\sqrt{\rho_-}+\rho_-\sqrt{\rho_+}}),
\eqn{gasnon}
\ee
with
\be
s=\frac{v_-\sqrt{\rho_-}+v_+\sqrt{\rho_+}}{\sqrt{\rho_-}+\sqrt{\rho_+}}.
\eqn{sgasnon}
\ee
In both cases, it follows by \re{lin}, that $c>v_->s>v_+>-c$.\qed
\begin{remark}
As pointed out by Shelkovich et al., in \cite{NilssonShelkovich2011_ApplAnal_I}, \cite{NilssonRozanovaShelkovich2011_ApplAnal_II}, 
in the absence of an internal energy term, $\epsilon$, in \re{3x3n}, the equations are overdetermined and  nontrivial delta-shock solutions    do not generally  exist. 
\end{remark}

 \begin{appendix}
 \section{Projection}\label{A}
 
  Equation \re{fin4} is stated purely in terms of $\tau_\sigma$, with typical physical quantities such as density and momentum  being represented   coordinate-axis variables in phase space.  Velocity can be  represented in terms of their  ratio. For example, in zero-pressure gas dynamics,  where $u_1=\rho, u_2=\rho u$,     velocity is given by $u=u_2/u_1\equiv\tau_{21}$, which need not be the same as $\tau$ if $n>2.$ 
 With this in mind when $n\geq 3$, we focus on the  case where $\phi$ depends only on a projection, $P_{ij}\Theta(\gamma), i\neq j$, and $\tilde{\phi}$  on the ratio $\tau_{ij}=\Theta_i/\Theta_j=u_i/u_j$, each defined in terms of the standard  basis, $\{{\mathbb E_k}\}_{k=1}^{n}\in{\mathbb R^n}$.
 The projection of $\Theta(\gamma)$ in \re{fin4} onto the $ij$ plane is the nonzero curve
\be
P_{ij}\Theta(\gamma)=P_{ij} ({(1+\tau^2)^{-1/2}}({\bf e}+ \tau{\bf f}))
= (1+\tau^2)^{-1/2}((e_i+f_i\tau){\mathbb E_i}+(e_j+f_j\tau){\mathbb E_j}),
\eqn{pox}
\ee
 where we assume ${e_j} {f_i}\neq{e_i}{f_j}$ for the projections of ${\bf e}$ and ${\bf f}$  to be linearly independent, and we note that  

\be
\tau_{ij}=
\frac{{e_i}+ \tau{f_i}}{{e_j}+\tau{f_j}}, \quad i\neq j,
\eqn{fin5}
\ee
has inverse  given by

\be
\tau=\frac{\tau_{ij}e_j-e_i}{f_i-\tau_{ij}f_j}.
\eqn{fin6}
\ee  
Further, if
 $\phi$ is to depend only on $P_{ij}\Theta$, then
\be
\tilde{\phi}(\tau)=\phi(\Theta(\gamma))=\phi(P_{ij}\Theta(\gamma))=\tilde{\phi}(\tau_{ij})\circ\tau.
\eqn{tox}
\ee 
 Therefore $\tilde{\phi}'(\tau)=\tilde{\phi}'(\tau_{ij})\circ\tau\,\frac{d\tau_{ij}}{d\tau}=\tilde{\phi}'(\tau_{ij})\circ{\tau}\,\frac{e_jf_i-e_if_j}{(e_j+\tau f_j)^2}$
and monotonicity of $\tilde{\phi}(\tau_{ij})$ follows from that for $\tilde{\phi}(\tau)$ provided $e_jf_i>e_if_j$, with ordering $\gamma_+>\gamma_-\iff\tau_+>\tau_-\iff\tau_{ij+}>\tau_{ij-}$ provided  $\tilde{\phi}'(\tau)$ is of constant sign.

 Finally, taking quotients in \re{ful}  gives
 \be
 \tau_{ij\sigma}=\frac{\alpha_+\Theta_{i+}+\alpha_-\Theta_{i-}}{\alpha_+\Theta_{j+}+\alpha_-\Theta_{j-}}
 \eqn{more}
 \ee
 where $\alpha_\pm=\pm(\tilde{\phi}(\tau_\sigma)-\phi_{\pm})r_{\pm}$, and so  $\tau_{ij\sigma}$ needs to satisfy
\be
\tau_{ij\sigma}=\frac{(\tilde{\phi}(\tau_{ij\sigma})-\phi_{+})r_{+}\Theta_{i+}-(\tilde{\phi}(\tau_{ij\sigma})-\phi_{-})r_{-}\Theta_{i-}}{(\tilde{\phi}(\tau_{ij\sigma})-\phi_{+})r_{+}\Theta_{j+}-(\tilde{\phi}(\tau_{ij\sigma})-\phi_{-})r_{-}\Theta_{j-}}
=\frac{[\phi r\Theta_i]-\tilde{\phi}(\tau_{ij\sigma})[r\Theta_i]}
{[\phi r\Theta_j]-\tilde{\phi}(\tau_{ij\sigma})[r\Theta_j]}
\eqn{tij}
\ee
or, as in \re{stij1},
\be
\tilde{\phi}(\tau_{ij\sigma})=\frac{[\phi r\Theta_j]\tau_{ij\sigma}-[\phi r\Theta_i]}
{[r\Theta_j]\tau_{ij\sigma}-[r\Theta_i]}\equiv\frac{[\phi(u_j, u_i)]\cdot(\tau_{ij\sigma}, -1)}{[(u_j, u_i)]\cdot(\tau_{ij\sigma}, -1)},
\eqn{stij}
\ee
in order to find $\tilde{\phi}(\tau_{ij\sigma})=s$. 
 \footnote{This can also be seen   substituting \re{fin6} into \re{fin4} at $\gamma_\sigma$, from which $\tau_\sigma$ follows via \re{fin6}. Obtaining uniqueness of $\Theta_\sigma\in sp\{\Theta_-, \Theta_+\}$ is  equivalent to that of finding uniqueness for its projection in $sp\{P_{ij}\Theta_-, P_{ij}\Theta_+\}$, by the comments after\re{tox}.}.

\end{appendix}


\section{Bibliography}

\end{document}